\definecolor{meinBlau}{rgb}{0,0.2,0.65} %color for in-document links
\definecolor{blau}{rgb}{0,0,0.75} %color for in-document links
\definecolor{rot}{rgb}{0.74,0,0} %color for in-document links
\newtheorem{theorem}{Theorem}
\newtheorem{lemma}[theorem]{Lemma}
\newtheorem{prop}{Proposition}
\theoremstyle{definition}
\newtheorem{urn}{Urn}
\newtheorem{remark}{Remark}
\newtheorem{example}{Example}
\newtheorem{defi}{Definition}
\def\P{{\mathbb {P}}}
\def\E{{\mathbb {E}}}
\newcommand{\impli}{\ensuremath{\ \Rightarrow\ }}
\newcommand{\mt}[1]{{\textsc{#1}}}
\newcommand{\dit}{\ensuremath{(b,d)}\text{-ary ITs}}
\newcommand{\port}{\ensuremath{(b,\alpha)}\text{-PORTs}}
\newcommand{\Dit}{\ensuremath{(b,d)}\text{-ary} \text{in\-crea\-sing} \text{trees}}
\newcommand{\Port}{\ensuremath{(b,\alpha)}\text{-plane} \text{oriented} \text{recursive} \text{trees}}
\newcommand{\Rec}{\text{bucket recursive trees}}
\newcommand{\fallfak}[2]{\ensuremath{#1^{\underline{#2}}}}
\newcommand{\N}{\ensuremath{\mathbb{N}}}
\newcommand{\Q}{\ensuremath{\mathbb{Q}}}
\DeclareMathOperator{\grad}{deg}
\newcommand{\Cc}{\ensuremath{\frac{c_1}{c_2}}}
\newcommand{\cC}{\ensuremath{\frac{c_2}{c_1}}}
\DeclareMathOperator{\law}{\overset{\mathcal{L}}{=}}
\DeclareMathOperator{\claw}{\overset{\mathcal{L}}{\rightarrow}}
\begin{document}

\author[M.~Kuba]{Markus Kuba}
\address{Markus Kuba\\
Department Applied Mathematics and Physics\\
University of Applied Sciences - Technikum Wien\\
H\"ochst\"adtplatz 5, 1200 Wien} %
\email{kuba@technikum-wien.at}

\author[A.~Panholzer]{Alois Panholzer}
\address{Alois Panholzer\\
Institut f{\"u}r Diskrete Mathematik und Geometrie\\
Technische Universit\"at Wien\\
Wiedner Hauptstr. 8-10/104\\
1040 Wien, Austria} \email{Alois.Panholzer@tuwien.ac.at}

\title[Tree evolution processes for bucket increasing trees]{Tree evolution processes for bucket increasing trees}

\keywords{Increasing trees, multilabelled trees, tree evolution processes}%
\subjclass[2000]{05C05, 05A15, 05A19} %

\begin{abstract}
We provide a fundamental result for bucket increasing trees, which gives a complete characterization of all families of bucket increasing trees that can be generated by a tree evolution process. We also provide several equivalent properties, complementing and extending earlier results for ordinary increasing trees to bucket trees. Additionally, we state second order results
for the number of descendants of label $j$, again extending earlier results in the literature.
\end{abstract}

\maketitle

\section{Introduction}
Increasing trees of size $n$ are rooted labelled trees with label set $[n]=\{1,2\dots,n\}$, which have the property that the labels are increasing along any path from the root to a leaf. 
An important and fundamental result for increasing trees is the characterization of such tree families, which can be generated by a tree evolution process. Increasing trees and in particular those families generated by tree evolution processes are of great importance in applications, as they are used to describe the spread of epidemics, to model pyramid schemes, as a model of stochastic processes like the Chinese restaurant process or P\'olya-Eggenberger urn models, and as a growth model of the world wide web. It turns out~\cite{PanPro2007} that only three different families, namely recursive trees, $d$-ary increasing trees ($d\in\N\setminus\{1\}$), and generalized plane-oriented recursive trees\footnote{The latter families, as well as closely related trees, also appear in the literature under the names preferential attachment trees~\cite{AddarioA,BA}, nonuniform random recursive trees~\cite{JerzyA}, heap-ordered trees~\cite{MPP,Prod1996}, or scale-free trees~\cite{Bollo2,Bollo1}.} can be constructed by a tree evolution process. These three tree families are sometimes grouped under the umbrella \emph{grown families of increasing trees} or \emph{very simple increasing trees}. They are studied in a myriad of articles including the recent studies~\cite{AddarioB,BertoinA,BodiniA,WagnerB,FuchsA,FuchsB,JansonA,JansonB,HenningA,WagnerA} and they are also intimately connected to urn models~\cite{AddarioA,Janson2005RSA,PanPro2007}. We also refer to the book of Drmota~\cite{Drmota} and the many references therein.

\smallskip

In this work we are concerned with so-called bucket increasing trees, which are multilabelled generalizations of increasing trees. All vertices $v$ of a tree $T$ are considered as buckets having a maximal capacity of $b\in\N$ labels. The integer $c=c(v)$ denotes the current capacity or current load of a node $v\in T$, with $1 \le c \le b$. Additionally, for bucket increasing trees we assume that only fully saturated nodes, i.e., nodes $v$ with $c(v)=b$, may have an out-degree greater than zero, thus all non-leaves are saturated. Leaves might be either saturated or unsaturated. The size $|T|$ of a tree $T$ is always given by the total number of labels in the tree, or equivalently, by the sum of the current capacities of the nodes. Apparently, for $b=1$ we obtain ordinary increasing trees where a single bucket or node may only hold a single label. As our main result we provide a fundamental characterization of bucket increasing trees. We prove, that only three tree (parameterized) families can be constructed by a tree evolution process, thus generalizing a result of~\cite{PanPro2007}. Moreover, we provide five additional equivalent properties of such evolving bucket increasing trees. This generalizes results of~\cite{PK2008,BucketPanKu2009,BucketPanKuAccepted,MahSmy1995,Pan2006,PanPro2007}. For the reader's convenience and the sake of completeness, we also collect and unify arguments of~\cite{BucketPanKu2009,BucketPanKuAccepted}. Furthermore, we generalize several results of~\cite{IncDesc,BucketPanKu2009,BucketPanKuAccepted} concerning the limit laws of the number of descendants of label $j$, providing second order results, in other words, a limit law for the number of descendants centered by its almost-sure beta limit law.

\section{Families of bucket increasing trees}
In the following we present the general combinatorial model of bucket increasing trees. Then, we describe the three 
different tree evolution processes, which generate random bucket increasing trees in a step-by-step fashion.
We also define the corresponding combinatorial models of such evolving bucket increasing trees.

\subsection{Combinatorial description of bucket increasing tree families}
Our presentation follows~\cite{BucketPanKu2009}. Our basic objects are rooted ordered trees, i.e., trees where the order of the subtrees of any node is of relevance. Each node $v$ of such a tree is a bucket with an integer capacity $c = c(v)$, with $1 \le c \le b$, for a given
maximal integer bucket size $b \ge 1$. We assume that all internal nodes (i.e., non-leaves) in the tree must be saturated ($c=b$), while the leaves might be either saturated or unsaturated ($c<b$). A tree $T$ defined in this way is called a bucket ordered tree with maximal bucket size $b$; let us denote by $\mathcal{B} = \mathcal{B}_{b}$ the family of all bucket ordered trees with maximal bucket size $b$.

As already mentioned before, for bucket ordered trees we define the size $|T|$ of a tree $T$ via $|T| = \sum_{v} c(v)$, where $c(v)$ ranges over
all vertices of $T$. An increasing labelling $\ell(T)$ of a bucket ordered tree $T$ is then a labelling of $T$, where the labels $\{1, 2,
\dots, |T|\}$ are distributed amongst the nodes of $T$, such that the following conditions are satisfied: $(i)$ every node $v$
contains exactly $c(v)$ labels, $(ii)$ the labels within a node are arranged in increasing order, $(iii)$ each sequence of labels along
any path starting at the root is increasing. A bucket ordered increasing tree $\tilde{T}$ is then given by a pair $\tilde{T}=(T,\ell(T))$. 

Then a class $\mathcal{T}$ of a family of bucket increasing trees with maximal bucket size $b$ can be defined
in the following way. A sequence of non-negative numbers $(\varphi_{k})_{k \ge 0}$ with $\varphi_{0} > 0$
and a sequence of non-negative numbers $\psi_{1}, \psi_{2}, \dots, \psi_{b-1}$
is used to define the weight $w(T)$ of any bucket ordered tree $T$ by $w(T) := \prod_{v} w(v)$, where $v$ ranges over all vertices of $T$. The weight $w(v)$ of a node $v$ is given as follows, where $\grad(v)$ denotes the out-degree (i.e., the number of children) of node $v$:
\begin{equation*}
   w(v) =
   \begin{cases}
      \varphi_{\grad(v)}, & \quad \text{if} \enspace c(v)=b, \\
      \psi_{c(v)}, & \quad \text{if} \enspace c(v) < b.
   \end{cases}
\end{equation*}
Thus, for saturated nodes the weight depends on the out-degree $\grad(v)$ and is described by the sequence $(\varphi_{k})_{k \ge 0}$, whereas for unsaturated nodes the weight depends on the capacity $c(v)$ and is described by the sequence $(\psi_{k})_{1 \le k \le b-1}$.

Furthermore, $\mathcal{L}(T)$ denotes the set of different increasing labellings $\ell(T)$ of the tree $T$ with distinct integers $\{1, 2, \dots, |T|\}$, where $L(T) := \big|\mathcal{L}(T)\big|$ denotes its cardinality.
The family $\mathcal{T}$ consists of all trees $\tilde{T}=(T,\ell(T))$, with their weights $w(T)$ and the set of increasing labellings $\mathcal{L}(T)$, and we define $w(\tilde{T}):=w(T)$. Concerning bucket ordered increasing trees, note that the left-to-right order of the subtrees of the nodes is relevant. E.g., the trees \raisebox{-0.6em}{\includegraphics[height=2em]{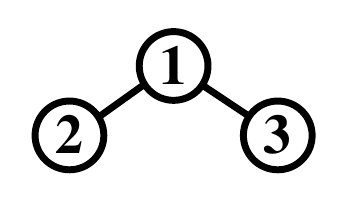}} and
\raisebox{-0.6em}{\includegraphics[height=2em]{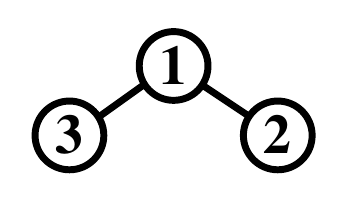}} are forming two different trees.

For a given degree-weight sequence $(\varphi_{k})_{k \ge 0}$ with a degree-weight generating function
$\varphi(t) := \sum_{k \ge 0} \varphi_{k} t^{k}$ and a bucket-weight sequence $\psi_{1}, \dots, \psi_{b-1}$,
we define now the total weights $T_n$ of such size-$n$ bucket ordered increasing trees by 
\[
T_{n} :=  \sum_{T\in\mathcal{B}\colon |T|=n}w(T)\cdot L(T)=\sum_{\tilde{T}=(T,\ell(T))\in\mathcal{T}\colon |T|=n} w(\tilde{T}).
\]

\medskip

It is advantageous for such enumeration problems to describe a family of increasing trees $\mathcal{T}$ by the following
formal recursive equation:
\begin{align}
   \mathcal{T} & = \psi_{1} \cdot \raisebox{-0.5ex}{\includegraphics[height=2.3ex]{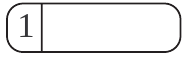}} \; \dot{\cup} \;
   \psi_{2} \cdot \raisebox{-0.5ex}{\includegraphics[height=2.3ex]{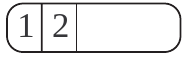}} \; \dot{\cup} \; \cdots \; \dot{\cup} \;
   \psi_{b-1} \cdot \raisebox{-0.5ex}{\includegraphics[height=2.3ex]{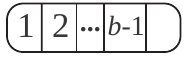}} \; \dot{\cup} \notag \\
   & \quad \varphi_{0} \cdot \raisebox{-0.5ex}{\includegraphics[height=2.3ex]{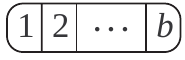}} \; \dot{\cup} \;
   \varphi_{1} \cdot \raisebox{-0.5ex}{\includegraphics[height=2.3ex]{bucket4.pdf}} \times \mathcal{T} \; \dot{\cup} \;
   \varphi_{2} \cdot \raisebox{-0.5ex}{\includegraphics[height=2.3ex]{bucket4.pdf}} \times \mathcal{T} \ast \mathcal{T} %\; \dot{\cup} \;
%   \varphi_{3} \cdot \raisebox{-0.5ex}{\includegraphics[height=2.3ex]{picsDC/bucket4.pdf}} \times \mathcal{T} \ast \mathcal{T} \ast \mathcal{T} \;
  % \dot{\cup} \; \cdots 
	\label{eqn:describ1} \\
   & = \psi_{1} \cdot \raisebox{-0.5ex}{\includegraphics[height=2.3ex]{bucket1.pdf}} \; \dot{\cup} \;
   \psi_{2} \cdot \raisebox{-0.5ex}{\includegraphics[height=2.3ex]{bucket2.pdf}} \; \dot{\cup} \; \cdots \; \dot{\cup} \;
   \psi_{b-1} \cdot \raisebox{-0.5ex}{\includegraphics[height=2.3ex]{bucket3.pdf}} \; \dot{\cup} \;
   \raisebox{-0.5ex}{\includegraphics[height=2.3ex]{bucket4.pdf}} \times \varphi\big(\mathcal{T}\big), \notag
\end{align}
where $\raisebox{-0.5ex}{\includegraphics[height=2.3ex]{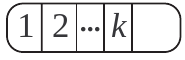}}$ denotes a bucket of capacity $k$ labelled by $1, 2, \dots, k$,
$\times$ the Cartesian product, $\ast$ the partition product for labelled
objects, and $\varphi(\mathcal{T})$ the substituted structure. On the other hand, we can also use standard notation~\cite{FlaSed} for formal specifications of combinatorial structures. Let $\mathcal{Z}$ denote the \emph{atomic class} (i.e., a single (uni)labelled node), $\mathcal{A}^{\Box} \ast \mathcal{B}$ the \emph{boxed product} (i.e., the smallest label is constrained to lie in the $\mathcal{A}$ component) of the combinatorial classes $\mathcal{A}$ and $\mathcal{B}$. Then, 
\begin{equation}
\begin{split}
\label{eqn:describ2}
 \mathcal{T} & = \psi_1\cdot \mathcal{Z}^{\Box} + \psi_2\cdot (\mathcal{Z}^{\Box})^2+ \dots
+ \psi_{b-1}\cdot (\mathcal{Z}^{\Box})^{b-1} + (\mathcal{Z}^{\Box})^{b}\ast \varphi(\mathcal{T})\\
 & = \sum_{k=1}^{b-1}\psi_{k}\cdot (\mathcal{Z}^{\Box})^{k} + (\mathcal{Z}^{\Box})^{b}\ast \varphi(\mathcal{T}).
\end{split}
\end{equation}
Here the meaning of $(\mathcal{Z}^{\Box})^{k} \ast \mathcal{B}$ is $\mathcal{Z}^{\Box} \ast \left(\mathcal{Z}^{\Box} \ast \left(\cdots \ast \left(\mathcal{Z}^{\Box} \ast \mathcal{B}\right)\right)\right)$, with $k$ occurrences of $\mathcal{Z}^{\Box}$.

\medskip

Using above formal descriptions~\eqref{eqn:describ1} or \eqref{eqn:describ2}, one can show that the exponential generating function
$T(z) := \sum_{n \ge 1} T_{n} \frac{z^{n}}{n!}$ of the total weights $T_{n}$ is characterized by the following result.
\begin{prop}[\cite{BucketPanKu2009}]\label{prop:DEQ_BIC}
The exponential generating function $T(z)$ of the total weights $T_{n}$ of bucket increasing trees with bucket-weight sequence $(\psi_{k})_{1\le k\le b-1}$ and degree-weight generating function $\varphi(t)$ satisfies 
an ordinary differential equation of order $b$:
\begin{align}
   \label{eqna1}
   \frac{d^{b}}{d z^{b}} T(z) & = \varphi\big(T(z)\big), 
\end{align}
with initial conditions
\[
   T(0)=0, \qquad T^{(k)}(0) = \psi_{k}, \quad \text{for} \enspace 1 \le k \le b-1.
\]
\end{prop}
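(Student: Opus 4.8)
The plan is to translate the symbolic specification~\eqref{eqn:describ2} into a functional equation for the exponential generating function $T(z)$ by means of the standard symbolic calculus for labelled combinatorial classes~\cite{FlaSed}, and then to convert the resulting integral equation into the differential equation~\eqref{eqna1} by differentiating $b$ times. The dictionary entries I will use are: the atomic class $\mathcal{Z}$ has EGF $z$; the partition (labelled) product of classes corresponds to the product of their EGFs; and for the boxed product $\mathcal{A}^{\Box}\ast\mathcal{B}$, in which the least label is forced into the $\mathcal{A}$-component, the EGF is $\int_{0}^{z}A'(t)B(t)\,dt$. In particular $\mathcal{Z}^{\Box}\ast\mathcal{B}$ has EGF $\int_{0}^{z}B(t)\,dt$ (since here $A(t)=t$), and iterating, the $b$-fold boxed product $(\mathcal{Z}^{\Box})^{b}\ast\mathcal{B}$ translates into the $b$-fold iterated antiderivative of $B$ that vanishes, together with its first $b-1$ derivatives, at $z=0$. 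Finally, the substituted class $\varphi(\mathcal{T})=\sum_{k\ge 0}\varphi_{k}\mathcal{T}^{k}$ — a plane sequence of root subtrees whose multiplicities are governed by the degree-weight sequence — has EGF $\sum_{k\ge0}\varphi_{k}T(z)^{k}=\varphi\big(T(z)\big)$, which is a well-defined formal power series since $T(z)$ has zero constant term, so each coefficient receives contributions from only finitely many $k$.

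Applying this dictionary term by term to~\eqref{eqn:describ2}, and using that $(\mathcal{Z}^{\Box})^{k}$ has EGF $z^{k}/k!$ for the unsaturated-leaf terms, one obtains the integral equation
\begin{equation*}
 T(z) = \sum_{k=1}^{b-1}\psi_{k}\,\frac{z^{k}}{k!} \;+\; \int_{0}^{z}\!\int_{0}^{t_{1}}\!\cdots\int_{0}^{t_{b-1}} \varphi\big(T(t_{b})\big)\,dt_{b}\cdots dt_{1}.
\end{equation*}
The polynomial $\sum_{k=1}^{b-1}\psi_{k}z^{k}/k!$ has degree at most $b-1$, so its $b$-th derivative vanishes, while the $b$-th derivative of the iterated integral equals $\varphi(T(z))$ by construction; hence $T^{(b)}(z)=\varphi(T(z))$, which is~\eqref{eqna1}. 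Evaluating at $z=0$, the iterated integral vanishes to order $b$, so it contributes nothing to $T$ nor to any of its first $b-1$ derivatives at the origin; reading off the coefficients of the polynomial part then yields $T(0)=0$ and $T^{(k)}(0)=\psi_{k}$ for $1\le k\le b-1$, as claimed.

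The only genuinely delicate point — and the one I would spend most care on — is justifying the dictionary entry for the iterated boxed product: that forcing the labels $1,\dots,b$ into the (necessarily saturated) root bucket of capacity $b$ is faithfully encoded by the $b$ nested occurrences of $\mathcal{Z}^{\Box}$, and hence by $b$ successive integrations, and moreover that the plane order of the root subtrees and the degree weights $\varphi_{k}$ enter exactly through the substitution $\varphi(T(z))$ and not through some $\exp$- or cycle-type construction. Once this bookkeeping is settled, the remaining steps are routine manipulations of formal power series. An essentially equivalent but more computational alternative would be to set up directly the recurrence for the total weights $T_{n}$ obtained by deleting the root bucket of a size-$n$ tree and summing over its possible out-degrees, and then to check that the function determined by~\eqref{eqna1} with the stated initial conditions has Taylor coefficients satisfying this recurrence.
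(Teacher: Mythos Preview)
Your proof is correct and is precisely the approach the paper indicates: the paper does not actually prove this proposition but cites it from~\cite{BucketPanKu2009}, merely remarking that it follows from the formal descriptions~\eqref{eqn:describ1} or~\eqref{eqn:describ2}. Your translation of the specification~\eqref{eqn:describ2} via the symbolic-method dictionary for the boxed product --- iterated integration for $(\mathcal{Z}^{\Box})^{b}\ast\varphi(\mathcal{T})$ and $z^{k}/k!$ for the unsaturated-leaf terms $(\mathcal{Z}^{\Box})^{k}$ --- is exactly what that remark has in mind, and the remaining differentiation and reading off of initial conditions is routine.
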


\begin{remark}[Differential equation for $T'(z)$]
Assume that $\varphi'(t)=g\big(\varphi(t)\big)$ for some function $g(z)$. Then, the derivative $F(z)=T'(z)$
satisfies the differential equation
\[
   F^{(b)}(z)  - F(z)\cdot g\Big(F^{(b-1)}(z)\Big)=0.
\]
Such differential equations occur in applications in the context of the Prandtl-Blasius flow %(with $b=3$ and $\varphi(t)=e^t$)
and are, somewhat curiously, related to bucket increasing trees~\cite{KubPan2016}.
\end{remark}

\begin{example}[Bucket ordered increasing trees]
A basic example of bucket increasing trees are bucket ordered increasing trees with $\psi_k=1$, $1 \le k \le b-1$ and $\varphi_k=1$, $k\ge 0$, i.e., $\varphi(t) = \frac{1}{1-t}$. They are used as the fundamental underlying tree family for all the other weighted bucket increasing trees and can be described combinatorially by the sequence operator,
\[
 \mathcal{T}= \sum_{k=1}^{b-1}(\mathcal{Z}^{\Box})^{k} + (\mathcal{Z}^{\Box})^{b}\ast \textsc{Seq}(\mathcal{T}),
\]
and the exponential generating function $T(z)$ satisfies
\[
 \frac{d^{b}}{d z^{b}} T(z) = \frac1{1-T(z)},
\qquad T(0)=0,\quad T^{(k)}(0)=1,\, 1\le k\le b-1.
\]
For $b=1$ such tree families are often called plane-oriented recursive trees and the total weights $T_{n}$, which are here simply the total numbers of such trees, are given by $T_n=(2n-3)!!$ (see \href{https://oeis.org/A001147}{A001147}). For $b=2$, an asymptotic expansion of the total numbers $(T_n)_{n\ge 1}=(1,1,1,3,13,77,\dots)$ (see \href{https://oeis.org/A032035}{A032035} and Figure~\ref{fig:bucket_ordered}) of such trees has been obtained by Bodini et al.~\cite{Hwang2016} in the context of increasing diamonds.

\begin{figure}%
\includegraphics[scale=0.4]{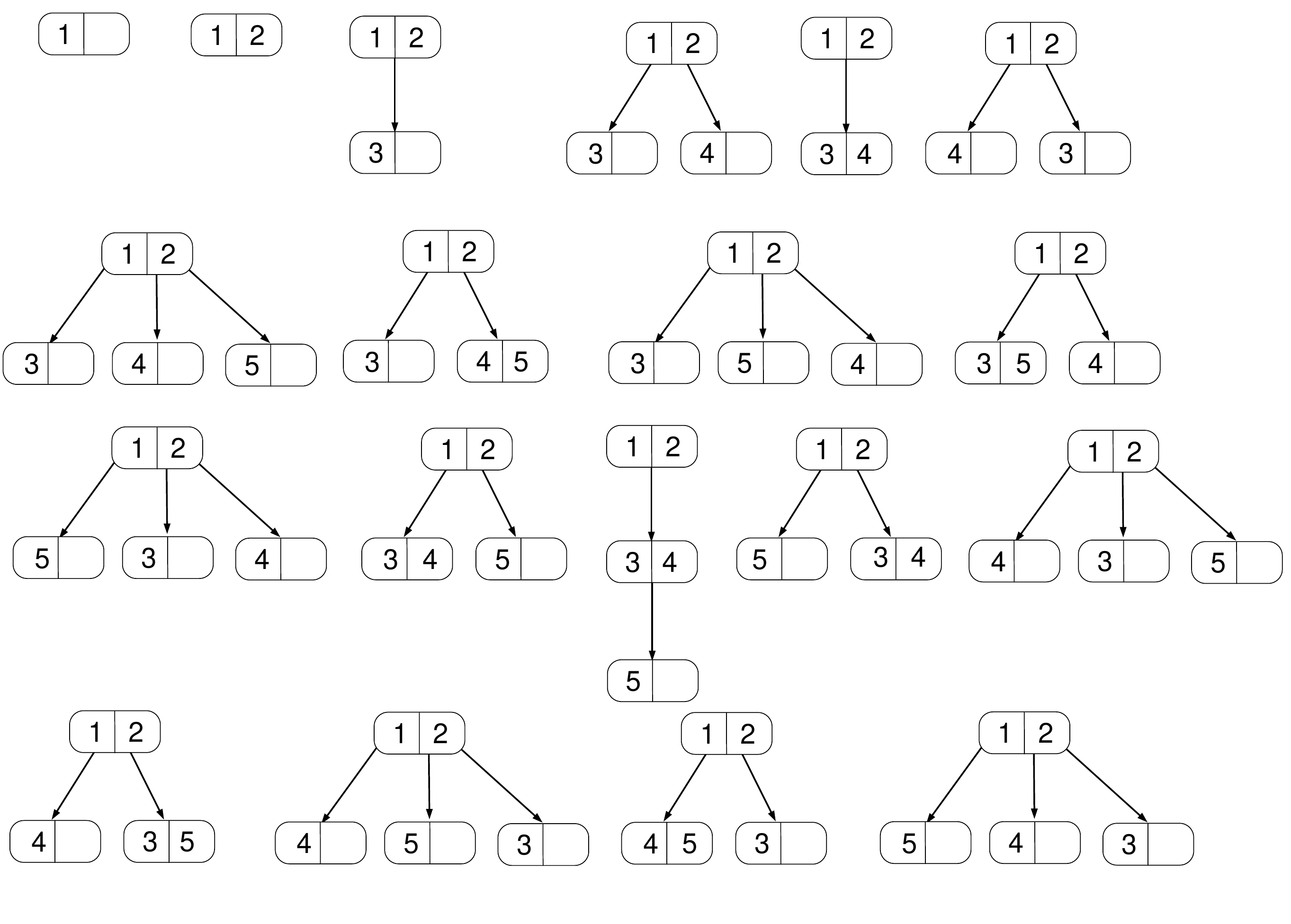}%picsDC/incBucketPorts.pdf
\caption{Bucket ordered increasing trees of sizes one up to five.}%
\label{fig:bucket_ordered}
\end{figure}

\end{example}

\smallskip

Given a certain degree-weight sequence $(\varphi_{k})_{k \ge 0}$ and a bucket-weight sequence \newline
$(\psi_{k})_{1\le k\le b-1}$
specifying a family $\mathcal{T}$ of bucket increasing trees. We obtain random (ordered) bucket increasing trees $\mathcal{T}_n$ of size $n$, when assuming that each increasingly labelled bucket ordered tree $\tilde{T}\in\mathcal{T}_n$ of size $n$ is chosen with a probability proportional to its weight $w(\tilde{T})$:
\[ 
\P\{\tilde{T}\}=\frac{w(\tilde{T})}{T_n}=\frac{\displaystyle{\Big(\prod_{\substack{v\in V(\tilde{T}): \\ c(v)=b}}\varphi_{\deg(v)}\Big)\Big(\prod_{\substack{v\in V(\tilde{T}): \\ c(v)<b}}\psi_{c(v)}\Big)  }}{T_n}.
\]

For the presentation of the combinatorial model we choose what we consider the most natural model, i.e., starting always with a single tree of size one, similar to the case $b=1$ of ordinary increasing trees~\cite{PanPro2007}. However, sometimes it may be beneficial to alternatively consider different sequences corresponding, for example, to coloured trees. 
Thus, we note that the random bucket increasing trees specified by bucket-weights $\psi_k$ and degree-weights $\varphi_k$ are only unique up to scaling, as one can scale the weight sequences in the following way. 

\begin{lemma}[Random bucket increasing trees and scaling of weight sequences]
\label{LemScaling}
Given two scaling parameters $a,s>0$ and two pairs of degree-weight and bucket-weight sequences $(\hat{\varphi}_{k})_{k \ge 0}$, $(\hat{\psi}_{k})_{1\le k\le b-1}$ and $(\varphi_{k})_{k \ge 0}$,  $(\psi_{k})_{1\le k\le b-1}$, respectively, related by
\[
\hat{\varphi}_{k} = a^b \cdot s^{k-1} \cdot \varphi_{k}, \quad \text{for $k \ge 0$}, \qquad \text{and} \qquad
\hat{\psi}_{k} = a^k \cdot s^{-1} \cdot \psi_{k}, \quad \text{for $1 \le k \le b-1$},
\]
or equivalently, $\hat{\varphi}(t) = a^b s^{-1} \varphi(s t)$ for the corresponding degree-weight generating functions.

Then, both pairs of weight sequences lead to the same distribution of random bucket increasing trees.
\end{lemma}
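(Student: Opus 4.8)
The plan is to prove the stronger, tree‑by‑tree statement that the two weight functions differ only by a factor depending on the tree size: for every increasingly labelled bucket ordered tree $\tilde T=(T,\ell(T))\in\mathcal{T}$ one has
\[
\hat w(\tilde T) = a^{\,|\tilde T|}\, s^{-1}\, w(\tilde T).
\]
Granting this, summing over all $\tilde T$ with $|\tilde T|=n$ and recalling $T_n=\sum_{|\tilde T|=n} w(\tilde T)$ gives $\hat T_n = a^n s^{-1} T_n$, so the scaling factors cancel in $\P\{\tilde T\}=w(\tilde T)/T_n$ and both weight sequences induce the same probability measure on size‑$n$ trees, for every $n$.

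To establish the weight identity I would first record the equivalence of the two forms of the hypothesis, $\hat\varphi(t)=\sum_{k\ge 0}\hat\varphi_k t^k=\sum_{k\ge 0}a^b s^{k-1}\varphi_k t^k = a^b s^{-1}\varphi(st)$, and conversely by comparing coefficients. Then fix $\tilde T$ and let $S$ be its set of saturated nodes ($c(v)=b$) and $U$ its set of unsaturated nodes ($c(v)<b$); since every non‑leaf is saturated, the nodes in $U$ are leaves, hence have out‑degree $0$. Substituting the scaling relations into $\hat w(\tilde T)=\prod_{v\in S}\hat\varphi_{\deg(v)}\cdot\prod_{v\in U}\hat\psi_{c(v)}$ yields
\[
\hat w(\tilde T) = a^{\,b|S|+\sum_{v\in U} c(v)}\; s^{\,\sum_{v\in S}(\deg(v)-1)-|U|}\; w(\tilde T),
\]
and it remains only to simplify the two exponents.

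The exponent of $a$ is $b|S|+\sum_{v\in U}c(v)=\sum_{v\in S}c(v)+\sum_{v\in U}c(v)=\sum_v c(v)=|T|$, directly from the definition of the size of a bucket ordered tree. For the exponent of $s$, I use that in any rooted tree the out‑degrees sum to the number of nodes minus one; as only nodes of $S$ have positive out‑degree, $\sum_{v\in S}\deg(v)=(|S|+|U|)-1$, so $\sum_{v\in S}(\deg(v)-1)=|U|-1$ and the exponent of $s$ is $(|U|-1)-|U|=-1$. This gives $\hat w(\tilde T)=a^{|T|}s^{-1}w(\tilde T)$, as claimed.

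An alternative route goes through the differential equation of Proposition~\ref{prop:DEQ_BIC}: one checks that $\hat T(z):=s^{-1}T(az)$ satisfies $\hat T^{(b)}(z)=\hat\varphi(\hat T(z))$ with $\hat T(0)=0$ and $\hat T^{(k)}(0)=a^k s^{-1}\psi_k=\hat\psi_k$, so by uniqueness $\hat T_n=a^n s^{-1}T_n$; this, however, only identifies the total weights and still needs the tree‑by‑tree identity to conclude equality of the distributions. The main (and essentially only) obstacle is the bookkeeping of the two exponents, which rests on the elementary tree identities $\sum_v c(v)=|T|$ and $\sum_v \deg(v)=\#\{\text{nodes}\}-1$ together with the structural fact that unsaturated nodes are leaves.
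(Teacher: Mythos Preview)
Your proof is correct and follows essentially the same approach as the paper: both compute $\hat w(\tilde T)$ by substituting the scaling relations into the product definition, then reduce the exponents of $a$ and $s$ via the identities $\sum_v c(v)=|\tilde T|$ and $\sum_v(\deg(v)-1)=-1$ to obtain $\hat w(\tilde T)=a^{|\tilde T|}s^{-1}w(\tilde T)$. The only cosmetic difference is that the paper merges the saturated and unsaturated contributions into a single product $\prod_v a^{c(v)}s^{\deg(v)-1}$ (using $c(v)=b$ on $S$ and $\deg(v)=0$ on $U$), whereas you keep the two groups separate; the substance is identical.
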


\begin{proof}
We consider the weights $\hat{w}(\tilde{T})$ and $w(\tilde{T})$ of a tree $\tilde{T}$ with respect to $(\hat{\varphi}_{k})_{k \ge 0}$, $(\hat{\psi}_{k})_{1\le k\le b-1}$ and $(\varphi_{k})_{k \ge 0}$,  $(\psi_{k})_{1\le k\le b-1}$.
We have 
\begin{equation}
\begin{split}
\label{eqn:Scaling1}
   \hat{w}(\tilde{T}) &= \bigg(\prod_{\substack{v\in V(\tilde{T}): \\ c(v)=b}}\hat{\varphi}_{\deg(v)}\bigg)\bigg(\prod_{\substack{v\in V(\tilde{T}): \\ c(v)<b}}\hat{\psi}_{c(v)}\bigg)\\
	&= \bigg(\prod_{\substack{v\in V(\tilde{T}): \\ c(v)=b}} a^b \cdot s^{\deg(v)-1} \cdot \varphi_{\deg(v)}\bigg)\bigg(\prod_{\substack{v\in V(\tilde{T}): \\ c(v)<b}}a^{c(v)} \cdot s^{-1} \cdot \psi_{c(v)}\bigg)\\
	&=\bigg(\prod_{v\in V(\tilde{T})}a^{c(v)}s^{\deg(v)-1} \bigg)
	\bigg(\prod_{\substack{v\in V(\tilde{T}): \\ c(v)=b}}\varphi_{\deg(v)}\bigg)\bigg(\prod_{\substack{v\in V(\tilde{T}): \\ c(v)<b}}\psi_{c(v)}\bigg).
\end{split}
\end{equation}
The latter two products directly give the weight $w(\tilde{T})$. In order to simplify the first product, we use the properties
\begin{equation*}
   |\tilde{T}|= \sum_{v \in V(\tilde{T})} c(v), \quad \sum_{v\in V(\tilde{T})}(\deg(v)-1)=|V(\tilde{T})|-1 - |V(\tilde{T})| =-1.
	\end{equation*}
This implies that
\[
	\hat{w}(\tilde{T})=a^{\sum_{v\in V(\tilde{T})}c(v)} \cdot s^{\sum_{v\in V(\tilde{T})} \big(\deg(v)-1\big)} \cdot w(\tilde{T})
	= a^{|\tilde{T}|} s^{-1} w(\tilde{T}).
\]
Thus, when changing $(\varphi_{k})_{k\ge 0}$ to $(\hat{\varphi}_{k})_{k\ge 0}$ and also the bucket-weights correspondingly, the weight of any tree $\tilde{T}$ of size $n$ will be multiplied by the same factor $a^{n} s^{-1}$, which will affect the weight of all trees of size $n$ also by the same factor, leading to the same probability $\P\{\tilde{T}\}$ for both degree-weight and bucket-weight sequences.
\end{proof}

\subsection{Tree evolution processes and combinatorial models}

We collect the three growth processes generating bucket increasing trees~\cite{BucketPanKuAccepted,MahSmy1995} and the corresponding combinatorial descriptions from~\cite{BucketPanKu2009,BucketPanKuAccepted}. Note that here and throughout this work the capacities $c(v)=c_n(v)$ and the out-degree $\grad(v)=\grad_n(v)$ of a node $v$ in a tree $T$ are always dependent on the size $|T|=n$. We also mention that, from this point on, $T\in\mathcal{T}$ denotes a bucket increasing tree and not, as previously used, its unlabelled bucket ordered counterpart.

\begin{defi}[Bucket recursive trees]
\label{def0}
For the tree evolution process, we start with a single bucket as root node containing only label $1$. 
Given a tree $T$ of size $n\ge 1$. Let $p(v)=\P\{n+1 <_t v \mid c(v)\}$ denote the probability that node $v\in T$ attracts label $n+1$ conditioned on its capacity $c(v)$.
The family of random \emph{bucket recursive trees} is generated according to the probabilities 
\[
   p(v)  = \frac{c(v)}{n},
\] 
with capacity $1\le c(v)\le  b$, thus independent of the out-degree $\grad(v)\ge 0$ of node $v$.

\smallskip

A combinatorial model of bucket recursive trees is determined by the degree-weight and bucket-weight sequences
\begin{equation*}
   \varphi_{k} = \frac{(b-1)! \, b^{k}}{k!}, \quad \text{for} \enspace k \ge 0, \qquad \psi_{k} = (k-1)!, \quad \text{for} \enspace 1 \le k \le b-1,
\end{equation*}
such that $\varphi(t)=\sum_{k\ge 0}\varphi_k t^k=(b-1)! \cdot \exp(b t)$.
\end{defi}

\smallskip

\begin{defi}[\Dit]
\label{def1}
For the tree evolution process, we start, case $n=1$, with a single bucket as root node containing only label $1$. 
Given a tree $T$ of size $n\ge 1$. Again, let $p(v)$ denote the probability that node $v\in T$ attracts label $n+1$ in a bucket increasing tree of size $n\in\N$. 

\smallskip

The family of random \emph{(b,d)-ary increasing trees}, with $d\in\Q$ such that $(d-1)b\in\N_0$, is generated according to the probabilities
\[
   p(v)  =\displaystyle{ \frac{(d-1)c(v)+1-\grad(v)}{(d-1)n+1}}, 
\]
with $1\le c(v)\le  b$ and $\grad(v)\ge 0$.

\smallskip

A combinatorial model of $(b,d)$-ary increasing trees is determined by the degree-weight and bucket-weight sequences
\begin{equation*}
\begin{split}
   \varphi_{k} &= (b-1)!(d-1)^{b-1}\binom{b-1+\frac{1}{d-1}}{b-1}\binom{b(d-1)+1}{k}, \quad \text{for} \enspace k \ge 0,\\
   \psi_{k} &= (k-1)!(d-1)^{k-1}\binom{k-1+\frac{1}{d-1}}{k-1}, \quad \text{for} \enspace 1 \le k \le b-1,
\end{split}
\end{equation*}
such that $\varphi(t)= (b-1)!(d-1)^{b-1}\binom{b-1+\frac{1}{d-1}}{b-1}(1+t)^{b(d-1)+1}$.
\end{defi}

\smallskip

\begin{defi}[\Port]
\label{def2}
For the tree evolution process, we start, case $n=1$, with a single bucket as root node containing only label $1$. 
Given a tree $T$ of size $n\ge 1$. Let $p(v)$ denote the probability that node $v\in T$ attracts label $n+1$ in a bucket increasing tree of size $n\in\N$. 

The family of random \emph{(b,$\alpha$)-plane oriented recursive trees}, with $\alpha>0$, is generated according to the probabilities
\[
   p(v)  =  \displaystyle{ \frac{\grad(v)+(\alpha+1)c(v)-1}{(\alpha+1)n-1}},
\]
with $1\le c(v)\le  b$ and $\grad(v)\ge 0$.

\smallskip

A combinatorial model of $(b,\alpha)$-plane oriented recursive trees is determined by the degree-weight and bucket-weight sequences
\begin{equation*}
\begin{split}
   \varphi_{k} &=(b-1)!(\alpha+1)^{b-1}\binom{b-1-\frac{1}{\alpha+1}}{b-1}\binom{(\alpha+1)b-2+k}{k}, \quad \text{for} \enspace k \ge 0,\\
   \psi_{k} &= (k-1)!(\alpha+1)^{k-1}\binom{k-1-\frac{1}{\alpha+1}}{k-1}, \quad \text{for} \enspace 1 \le k \le b-1,
\end{split}
\end{equation*}
such that $\varphi(t)=\frac{(b-1)!(\alpha+1)^{b-1}\binom{b-1-\frac{1}{\alpha+1}}{b-1}}{(1-t)^{(\alpha+1)b-1}}$.
\end{defi}

We will see in Theorem~\ref{the:evo} that only half of the previously stated definitions are required:
the tree evolution processes determine the bucket-weight sequences as well as the degree-weight sequences and vice versa. 

\smallskip

From the definitions above and the differential equation~\eqref{eqna1} one directly obtains the enumerative results of~\cite{BucketPanKu2009,BucketPanKuAccepted}, which we restate below. These results are particularly of interest, as non-linear differential equations of order greater or equal three occurring in enumeration problems are extremely seldom to be solved in closed form; see for example variations of the differential equations in connection with the Blasius-type tree family~\cite{KubPan2016}, as well as~\cite{Hwang2016}.

\begin{prop}[\cite{BucketPanKu2009,BucketPanKuAccepted}]
\label{PropEnum}
The exponential generating functions $T(z)$ and the total weights $T_n$ of tree families generated according to Definition~\ref{def0},\ref{def1} and \ref{def2}, respectively, are given as follows.
\begin{itemize}
	\item Bucket recursive trees: 
	\begin{equation*}
T(z)=\log\big(\frac{1}{1-z}\big),\quad T_n=(n-1)!.
\end{equation*}
 \item \Dit: 
\[
T(z)=\frac{1}{(1-(d-1)z)^{\frac1{d-1}}}-1,\quad T_n=(n-1)!(d-1)^{n-1}\binom{n-1+\frac{1}{d-1}}{n-1}.
\] 
\item \Port:
\[
T(z)=1-(1-(\alpha+1)z)^{\frac1{\alpha+1}},\quad T_n=(n-1)!(\alpha+1)^{n-1}\binom{n-1-\frac{1}{\alpha+1}}{n-1}.
\]
\end{itemize}
\end{prop}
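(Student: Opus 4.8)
The plan is to exploit Proposition~\ref{prop:DEQ_BIC}: for each of the three families the degree-weight generating function $\varphi$ and the bucket-weights $(\psi_k)_{1\le k\le b-1}$ are spelled out in Definitions~\ref{def0}, \ref{def1} and \ref{def2}, so it suffices to check that the proposed closed form $T(z)$ is the unique analytic solution of the initial value problem $T^{(b)}(z)=\varphi\big(T(z)\big)$, $T(0)=0$, $T^{(k)}(0)=\psi_k$ ($1\le k\le b-1$), coming from \eqref{eqna1}; a local analytic solution exists and is unique since $\varphi$ is analytic at $0$ with $\varphi(0)=\varphi_0>0$. Once $T(z)$ is confirmed, the total weights are obtained from $T_n=n!\,[z^n]T(z)$ by reading off a single logarithmic or generalized binomial coefficient.

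For the two algebraic families everything reduces to one observation: if $u(z)=1-\lambda z$ then $\frac{d}{dz}u^{\rho}=-\lambda\rho\,u^{\rho-1}$, so each derivative of a power of $u$ is again a constant times a power of $u$. In the $(b,d)$-ary case one writes $1+T(z)=(1-(d-1)z)^{-1/(d-1)}$; differentiating repeatedly gives $T^{(k)}(z)=c_k\,(1-(d-1)z)^{-\frac1{d-1}-k}$ with $c_1=1$ and the recursion $c_{k+1}=(d-1)\big(\tfrac1{d-1}+k\big)c_k$, which telescopes to $c_k=(d-1)^{k-1}\prod_{j=1}^{k-1}\big(j+\tfrac1{d-1}\big)$. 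Comparing with Definition~\ref{def1} this is exactly $\psi_k$, so evaluating at $z=0$ (where $u=1$) matches all the initial conditions, and the same formula at $k=b$ is precisely the leading constant of $\varphi$; combined with $(1+T)^{b(d-1)+1}=(1-(d-1)z)^{-b-\frac1{d-1}}$ this yields $T^{(b)}(z)=c_b\,(1-(d-1)z)^{-\frac1{d-1}-b}=\varphi\big(T(z)\big)$. The $(b,\alpha)$-PORT case is identical up to signs: with $1-T(z)=(1-(\alpha+1)z)^{1/(\alpha+1)}$ one gets the analogous recursion with $\alpha+1$ in place of $d-1$ and $\big(k-\tfrac1{\alpha+1}\big)$ in place of $\big(k+\tfrac1{d-1}\big)$, reproducing the $\psi_k$ of Definition~\ref{def2}, while $(1-T)^{-((\alpha+1)b-1)}=(1-(\alpha+1)z)^{-b+\frac1{\alpha+1}}$ matches $\varphi\big(T(z)\big)$ with $T^{(b)}(z)$.

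The bucket recursive case is the simplest: for $T(z)=\log\frac1{1-z}$ one has $T^{(k)}(z)=(k-1)!\,(1-z)^{-k}$, hence $T(0)=0$, $T^{(k)}(0)=(k-1)!=\psi_k$, and $T^{(b)}(z)=(b-1)!\,(1-z)^{-b}=(b-1)!\,e^{bT(z)}=\varphi\big(T(z)\big)$ by Definition~\ref{def0}.

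Finally, extracting coefficients: $[z^n]\log\frac1{1-z}=\frac1n$ gives $T_n=(n-1)!$; the generalized binomial theorem gives $[z^n](1-(d-1)z)^{-1/(d-1)}=(d-1)^n\binom{n-1+\frac1{d-1}}{n}$ and $[z^n]\big(1-(1-(\alpha+1)z)^{1/(\alpha+1)}\big)=-(-1)^n\binom{1/(\alpha+1)}{n}(\alpha+1)^n$ for $n\ge1$, and pulling out one factor from the relevant rising/falling product turns these into the stated $T_n=(n-1)!(d-1)^{n-1}\binom{n-1+\frac1{d-1}}{n-1}$ and $T_n=(n-1)!(\alpha+1)^{n-1}\binom{n-1-\frac1{\alpha+1}}{n-1}$. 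None of this is genuinely hard; the only point requiring care is the bookkeeping of the differentiation constants $c_k$ and recognizing that their telescoped product is exactly the closed form prescribed for $\psi_k$, with the $k=b$ instance being what makes $T^{(b)}(z)$ agree with $\varphi\big(T(z)\big)$.
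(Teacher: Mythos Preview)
Your proposal is correct and follows exactly the approach the paper indicates: the paper does not give a separate proof of this proposition but states that the formul{\ae} follow ``directly'' from the differential equation~\eqref{eqna1} and the definitions, and later (in the implication $(ii)\Rightarrow(i)$ of Theorem~\ref{the:evo}) explicitly says one ``can readily check that the generating functions stated in Proposition~\ref{PropEnum} solve the corresponding differential equations.'' Your write-up simply makes this verification explicit, including the telescoping of the differentiation constants $c_k$ to match the $\psi_k$ and the coefficient extraction for $T_n$, all of which is carried out correctly.
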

\begin{remark}[Weight-preserving families]
We note in passing (which is apparent from the formul{\ae} given) that the tree families in Definitions~\ref{def0},\ref{def1} and \ref{def2} preserve the total weights $T_n$ of the corresponding ordinary increasing tree families (bucket size $b=1$), as the total weights are independent of the bucket size $b$; see~\cite{BucketPanKuAccepted} for details.
\end{remark}

\section{Bucket increasing trees and tree Evolution processes}
In the following theorem we state the main result of this work, namely six different equivalent properties, characterizing 
bucket increasing trees generated by a tree evolution process.
\begin{theorem}
\label{the:evo}
The following properties of families of bucket increasing trees $\mathcal{T}$ are equivalent:

\begin{enumerate}[label=(\roman*)]
\item \mt{Combinatorial model}: The family $\mathcal{T}$ can be modelled combinatorially by bucket-weights $\psi_k$ and degree-weights 
$\varphi_k$ as given in Definition~\ref{def0},\ref{def1} and \ref{def2}, respectively, up to scaling. 

\smallskip

\item \mt{Affine linear ratio}: The total weights $T_n>0$ of trees of size $n$ of the family $\mathcal{T}$ satisfy for all $n\in\N$ the equation
\begin{equation}
\frac{T_{n+1}}{T_n}= c_1\cdot n + c_2,
\end{equation}
with fixed real constants $c_1$, $c_2$.

\smallskip

\item \mt{Tree evolution process}: The family $\mathcal{T}$ is generated according to the tree evolution processes as described in Definition~\ref{def0},\ref{def1} and \ref{def2}, respectively.

\smallskip

\item \mt{Probabilistic growth rule}: The family $\mathcal{T}$ can be constructed via an insertion process (resp.\ a probabilistic growth
rule), i.e., for every tree $T'$ of size $n$ with vertices  $v_j$, $1\le j\le |V(T')|$, there exist probabilities
$p_{T'}(v_j)$ such that when starting with a random tree $T'$ of size $n$, choosing
a vertex $v_j$ in $T'$ according to the probabilities $p_{T'}(v_j)$ and attaching label $n+1$ to it, we obtain
a random increasing tree $T$ of size $n+1$.

\smallskip

\item \mt{Preservation of randomness}: Starting with a random increasing tree $T\in\mathcal{T}$ of size $n\ge j$ and removing all labels larger than $j$ we obtain a random bucket increasing tree $T_0$ of size $j$ of the family $\mathcal{T}$. 

\smallskip

\item \mt{Balance}: Given a tree $T \in \mathcal{T}$, let  $m_{k}=m_k(T)= |\{u \in T : c(u)=k<b\}|$ be the number of unsaturated nodes of $T$ with capacity $k<b$ and $n_{k} =n_k(T)= |\{u \in T : c(u)=b \enspace \text{and} \enspace \grad(u)=k\}|$ be the number of saturated nodes of $T$ with out-degree $k \ge 0$. For all trees $T\in \mathcal{T}$ with $|T|=n$ the combinatorial model of $\mathcal{T}$ satisfies 
the balance condition
\begin{equation}
C_n=\sum_{k=1}^{b-1} m_{k} \frac{\psi_{k+1}}{\psi_{k}} + \sum_{k \ge 0} n_{k} (k+1) \psi_{1} \frac{\varphi_{k+1}}{\varphi_{k}},
\label{eq:balance}
\end{equation}
with $C_n$ being independent of the particular tree $T$.
\end{enumerate}
\end{theorem}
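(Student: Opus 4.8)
The plan is to carry the equivalences through the cycle $(i)\Rightarrow(vi)\Rightarrow(i)$, which contains the real content, and then to attach the remaining properties by soft arguments: $(i)\Rightarrow(vi)$ comes together with $(i)\Rightarrow(iii)$, while $(vi)\Rightarrow(iii)$, $(iii)\Rightarrow(iv)$ (trivially), $(iv)\Rightarrow(vi)$, $(iv)\Leftrightarrow(v)$ and $(ii)\Rightarrow(i)$ complete the picture. The object tying everything together is, for a tree $T'\in\mathcal{T}$ of size $n$ and a vertex $v$ of $T'$, the factor $f(v)$ by which the weight $w(\cdot)$ is multiplied when label $n+1$ is attached at $v$: if $v$ is unsaturated of capacity $c(v)=k$ then $f(v)=\psi_{k+1}/\psi_{k}$ (with the convention $\psi_{b}:=\varphi_{0}$, which covers $k=b-1$, where the bucket turns into a saturated leaf); if $v$ is saturated of out-degree $\deg(v)=k$ then, summing over the $k+1$ slots for a new capacity-$1$ leaf below $v$, $f(v)=(k+1)\,\psi_{1}\,\varphi_{k+1}/\varphi_{k}$. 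Summing over the vertices of $T'$ reproduces exactly the left-hand side of the balance equation~\eqref{eq:balance}: $S(T'):=\sum_{v}f(v)=\sum_{k=1}^{b-1}m_{k}\frac{\psi_{k+1}}{\psi_{k}}+\sum_{k\ge0}n_{k}(k+1)\psi_{1}\frac{\varphi_{k+1}}{\varphi_{k}}$. Moreover, deleting label $n+1$ from any increasingly labelled tree of size $n+1$ yields a \emph{unique} increasingly labelled tree of size $n$ together with a unique attachment position, so that $T_{n+1}=\sum_{|T'|=n}w(T')\,S(T')$. (For $b=1$ the first sum is empty and $\psi_{1}$ must be read as $\varphi_{0}$; that case is~\cite{PanPro2007}.)

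With this in hand, $(i)\Rightarrow(vi)$ is a short computation: inserting the explicit sequences of Definitions~\ref{def0},~\ref{def1} and~\ref{def2} one checks that $f(v)$ collapses to $c(v)$, to $(d-1)c(v)+1-\deg(v)$, and to $\deg(v)+(\alpha+1)c(v)-1$, respectively, whence, using only $\sum_{v}c(v)=n$ and $\sum_{v}\big(\deg(v)-1\big)=-1$, the sum $S(T')$ equals $n$, $(d-1)n+1$, resp.\ $(\alpha+1)n-1$; so~\eqref{eq:balance} holds with $C_{n}$ affine in $n$, and $f(v)/C_{n}$ is precisely the attachment probability $p(v)$ of the corresponding definition. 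Conversely, if~\eqref{eq:balance} holds with $S(T')\equiv C_{n}$, set $p_{T'}(v):=f(v)/C_{n}$; this is a probability distribution on the vertices of $T'$ (it sums to $S(T')/C_{n}=1$), and since $T_{n+1}=C_{n}T_{n}$ and every size-$(n+1)$ tree has a unique preimage, growing a $w$-weighted random tree of size $n$ by this rule produces a $w$-weighted random tree of size $n+1$; as $p_{T'}(v)$ depends on $v$ only through $c(v)$ and $\deg(v)$, this is a tree evolution process, giving $(vi)\Rightarrow(iv)$ and $(vi)\Rightarrow(iii)$ (and, combined with the previous sentence, $(i)\Rightarrow(iii)$), while $(iii)\Rightarrow(iv)$ is a mere specialization. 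For $(iv)\Rightarrow(vi)$ note that the probabilities of any valid growth rule are forced: for the unique preimage $(\tilde T',v)$ of a size-$(n+1)$ tree $\tilde T$, the equality $w(\tilde T)/T_{n+1}=\big(w(\tilde T')/T_{n}\big)\,p_{\tilde T'}(v)$ gives $p_{\tilde T'}(v)=(T_{n}/T_{n+1})f(v)$, and summing over $v$ forces $S(\tilde T')=T_{n+1}/T_{n}$ independently of $\tilde T'$, i.e.\ \eqref{eq:balance}. Finally $(iv)\Leftrightarrow(v)$ is the usual duality: starting the growth rule from size $1$ produces, by induction, a $w$-weighted random tree of every size, whose restriction to labels $\le j$ is obtained by deleting all labels $>j$; conversely, if deleting label $n+1$ from a $w$-weighted random size-$(n+1)$ tree returns a $w$-weighted random size-$n$ tree, the induced conditional law of the position of label $n+1$ supplies the growth rule.

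The substantive step is $(vi)\Rightarrow(i)$. Fix a tree $T^{-}$ of size $n-1$ and attach label $n$ at two of its vertices, obtaining trees of size $n$; since, by $(vi)$, $S$ is constant on trees of size $n$ and on trees of size $n-1$, the increment of $S$ produced by attaching label $n$ at a vertex $v$ of $T^{-}$ must be the same for every $v$, hence (letting $n$ grow, so that unsaturated vertices of every capacity $\le b-1$ and saturated vertices of every out-degree occur) it is a constant independent of $v$ and of $n$. Writing out these increments translates this into: the ratios $\psi_{k+1}/\psi_{k}$ ($1\le k\le b-1$) form an arithmetic progression, and $(k+1)\psi_{1}\varphi_{k+1}/\varphi_{k}$ is affine in $k$; equivalently, with $c_{1}$ the common difference, $\psi_{k+1}/\psi_{k}=c_{1}k+c_{2}$ and $(k+1)\psi_{1}\varphi_{k+1}/\varphi_{k}=c_{1}b+c_{2}-c_{2}k$ for constants $c_{1},c_{2}$, and the common increment equals $c_{1}$, so that $C_{n}=c_{1}n+c_{2}=T_{n+1}/T_{n}$ --- which is already $(ii)$ (consistently, Proposition~\ref{PropEnum} exhibits these ratios as $n$, $(d-1)n+1$, $(\alpha+1)n-1$). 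Solving the two recursions, $\psi_{k}$ is a rising-factorial product and $\varphi(t)$ is proportional to $e^{ct}$, $(1+ct)^{r}$, or $(1-ct)^{-r}$ according to $c_{2}=0$, $c_{2}>0$, or $c_{2}<0$; nonnegativity of all $\psi_{k},\varphi_{k}$ and positivity of all $T_{n}$ force $c_{1}\ge0$ and, when $c_{2}>0$, $r=\tfrac{c_{1}b}{c_{2}}+1\in\N$, i.e.\ $(d-1)b\in\N_{0}$ with $d=1+c_{1}/c_{2}$, whereas $c_{2}<0$ automatically yields $\alpha:=-c_{1}/c_{2}-1>0$. By Lemma~\ref{LemScaling} only the scaling-invariant data survive, and the resulting families are exactly those of Definitions~\ref{def0},~\ref{def1},~\ref{def2}, i.e.\ $(i)$. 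It remains to close the cycle at $(ii)$: from $T_{n+1}/T_{n}=c_{1}n+c_{2}$ one reads off $T_{n}$, hence $T(z)$ in closed form as one of $\log\frac{1}{1-cz}$, $(1-cz)^{-r}-1$, $1-(1-cz)^{r}$, or a polynomial; then $T^{(b)}=\varphi(T)$ from Proposition~\ref{prop:DEQ_BIC}, together with the local invertibility of $T$ near $0$ (as $T'(0)=\psi_{1}>0$, or $\varphi_{0}>0$ when $b=1$), recovers $\varphi=T^{(b)}\circ T^{-1}$; the polynomial alternative contradicts $T_{n}>0$, and $\varphi_{k}\ge0$ rules out the non-integer exponents, leaving precisely the three families, so $(ii)\Rightarrow(i)$.

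The step I expect to be the main obstacle is $(vi)\Rightarrow(i)$: one must verify that trees of each size are structurally rich enough for the two-vertex comparison to determine all increments (in particular that every relevant capacity and out-degree is realized --- which requires taking $n$ large but then propagates to all $n$), keep careful track of the boundary contribution where a capacity-$(b-1)$ bucket becomes a saturated leaf (the convention $\psi_{b}:=\varphi_{0}$), and rigorously discard the inadmissible ranges of $(c_{1},c_{2})$ using $\psi_{k},\varphi_{k}\ge0$ and $T_{n}>0$. A secondary difficulty is treating the three analytic regimes (and the $b=1$ boundary, together with the scaling of Lemma~\ref{LemScaling}) uniformly in the ODE step; everything else is formal once the identity $T_{n+1}=\sum_{|T'|=n}w(T')\,S(T')$ and the uniqueness of label-$(n+1)$ removal are available.
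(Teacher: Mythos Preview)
Your proposal is correct and covers all the implications, but the key step $(vi)\Rightarrow(i)$ is handled by a genuinely different argument than in the paper. The paper evaluates the balance sum on several explicit tree shapes --- a family of ``caterpillars'' $T(k)$ of size $b(b+1)$, and stars versus chains of $k+1$ saturated nodes --- and solves the resulting linear system for the auxiliary sequences $\beta_{k}=\psi_{k+1}/\psi_{k}$ and $\gamma_{k}=(k+1)\psi_{1}\varphi_{k+1}/\varphi_{k}$. You instead exploit that the one-step increment $S(T)-S(T^{-})$ depends only on the local type of the attachment vertex and must equal $C_{n}-C_{n-1}$, forcing $(\beta_{k})$ and $(\gamma_{k})$ to be arithmetic progressions linked through the boundary $\gamma_{0}=\beta_{b-1}+c_{1}$; this is more conceptual and avoids the case-by-case bookkeeping on special shapes, at the cost of having to argue (as you note) that enough vertex types coexist in some tree of each size to connect all increments. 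Your route also makes the identity $C_{n}=T_{n+1}/T_{n}$ appear immediately from $T_{n+1}=\sum_{|T'|=n}w(T')S(T')$, so $(vi)$ yields $(ii)$ on the way, whereas the paper treats $(i)\Leftrightarrow(ii)$ separately via the closed forms in Proposition~\ref{PropEnum}. For $(iv)\Rightarrow(vi)$ the paper compares two trees $T',T''$ through four capacity/degree cases; your version --- that the growth probability is forced to $p_{T'}(v)=(T_{n}/T_{n+1})f(v)$ and summing gives $S(T')=T_{n+1}/T_{n}$ --- is shorter and yields the same conclusion. The remaining pieces (the direct check $(i)\Rightarrow(vi)$ using $\sum_{v}c(v)=n$ and $\sum_{v}(\deg(v)-1)=-1$, the recovery $\varphi=T^{(b)}\circ T^{-1}$ for $(ii)\Rightarrow(i)$, and the admissibility analysis splitting on the sign of $c_{2}$) coincide with the paper's proof.
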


\begin{remark}[Connectivity]
The quantity $C_n$ in~\eqref{eq:balance} is essentially the total connectivity and given by the denominators $q_{n} =n$, $q_{n} = (d-1)n+1$ and $q_{n}=(\alpha+1)n-1$, respectively, of the probabilities $p(v)$ in Definition~\ref{def0}, \ref{def1} and \ref{def2}, respectively.%; see also~\eqref{eqn:denom}.
\end{remark}

\begin{remark}[Label-dependent probabilistic growth rules]
We emphasize that in the probabilistic growth rule the indices $j$ of the vertices $v_j$ \emph{do not}
refer to the labels, but simply to the different vertices (or buckets) in the tree $T'$. It is possible to obtain different families of random (bucket) increasing trees without the random tree model. For example, instead of taking the out-degree of vertices into account, we can create trees in a step-by-step fashion using a label-dependent growth rule. Given a weight sequence $(w_k)_{k\ge 0}$, such that $w_1>0$. Let $n\ge 1$. Starting with a tree of size $n$ and bucket size $b=1$, we attach label $n+1$ to the vertex labeled $k$ with probability
\[
\P\{n+1<_c k\}=\frac{w_k}{W_n},
\]
with total weight $W_n=\sum_{k=1}^{n}w_k$. Such trees are known in the literature (see, e.g., \cite{BorVat2006}) as weighted recursive trees, as for constant $w_k=c$, $k\ge 1$, this reduces to ordinary recursive trees.
\end{remark}

\begin{proof}
We prove Theorem~\ref{the:evo} by providing several implications. 
Throughout, we set for convenience $\psi_b=\varphi_0$. 
As a quick overview, we show that $(i)\impli(ii)$, $(ii)\impli(i)$, $(i)\impli (vi)$, $(i)\impli(iii)$, $(iii)\impli(iv)$,
$(iv)\impli(v)$, $(iv)\impli(vi)$, $(v)\impli(vi)$ $(vi)\impli(i)$.

\smallskip

%\textbf{Case: (iii)\impli(ii)}. Given a tree with $n$ labels. By definition of the tree evolution processes, the denominators of the probabilities $p(v)$ are given by 
%\begin{equation}
%\label{eqn:denom}
%q_n=
%\begin{cases}
%n, \quad \\
%(d-1)n+1,\quad \\
%(\alpha+1)n-1.
%\end{cases}
%\end{equation}
%By the random tree model, we assume that the probability of a tree $T\in\mathcal{T}$ of size $n$ is proportional to the total weights $T_{n}$.
%On the other hand, the probability that the random tree was created in a step by step fashion using the tree evolution process should
%match the probability obtained in the random tree model. Thus, the total weight of a tree of size $n$ is defined using the denominators~\eqref{eqn:denom} as 
%\[
%T_n=\lambda^n\cdot \prod_{k=1}^{n-1}q_k, 
%\]
%with a real scaling factor $\lambda>0$. This implies that the fraction $\frac{T_{n+1}}{T_n}$ satisfies
%\[
%\frac{T_{n+1}}{T_n}=\lambda\cdot q_n,
%\]
%which gives the stated affine linear ratio. 
%
%\smallskip

\textbf{Case (i)\impli(ii).} Due to the demand $T_{n} > 0$, for all $n \ge 1$, we get the restrictions:
   \begin{equation*}
	c_{1} \ge 0\quad \text{ and }\quad c_{2} > - c_{1},
	\end{equation*}
	since otherwise, there would exist $n \ge 1$ such that
   \[
	\frac{T_{n+1}}{T_{n}} = c_{1} n + c_{2} < 0.
	\]
	The bucket-weights $\psi_k$ are determined by the equation
	\[
	\psi_k=T_k,\quad 1\le k\le b-1. 
	\]

  Moreover, we have $\varphi_0=T_b$. Now we consider the subcase $c_{1} \neq 0$ and $c_{2} \neq 0$ and get for $T_{n}$:
   \begin{align*}
      T_{n} & = T_{1} \prod_{k=1}^{n-1} (c_{1} k + c_{2}) = \psi_{1} c_{1}^{n-1}
      \prod_{k=1}^{n-1} \big(\frac{c_{2}}{c_{1}} + k \big)
      = \frac{\psi_{1} c_{1}^{n}}{c_{2}}
      \big(\frac{c_{2}}{c_{1}} + n - 1 \big)^{\underline{n}}
      \\
      & =\frac{\psi_{1} c_{1}^{n} n!}{c_{2}} \binom{\frac{c_{2}}{c_{1}} + n-1}{n} = \frac{\psi_{1} (- c_{1})^{n} n!}{c_{2}} \binom{-\frac{c_{2}}{c_{1}}}{n},
   \end{align*}
   and further
   \begin{equation}
      \label{eqni1}
      T(z) = \sum_{n \ge 1} T_{n} \frac{z^{n}}{n!} = \frac{\psi_{1}}{c_{2}}
      \sum_{n \ge 1} \binom{-\frac{c_{2}}{c_{1}}}{n} (- c_{1} z)^{n}
      = \frac{\psi_{1}}{c_{2}} \Big(\frac{1}{(1-c_{1} z)^{\frac{c_{2}}{c_{1}}}} -1 \Big).
   \end{equation}
	This implies that
	\begin{equation}
      \label{eq:zOfT}
	z= z(T)=\frac{\Big(\frac{c_2}{\psi_1}\cdot T+1\Big)^{-\Cc}-1}{-c_1}.
	\end{equation}
  
	In order to decide, which values of $c_{1}$, $c_{2}$ are indeed possible choices, we have to
   compute the corresponding degree-weight generating functions. 
	Then, we check, whether they are
   admissible, such that $\varphi_{k} \ge 0$ for all $k \ge 0$, and non-degenerate, such that there exists a $k\ge 2$ with $\varphi_k>0$.
  
	\smallskip
	
	We differentiate $T(z)$, given in \eqref{eqni1}, $b$ times with respect to $z$ and obtain
   \begin{equation*}
      T^{(b)}(z) = \frac{\psi_{1}}{c_2}\cdot c_1^b\cdot \binom{b-1+\cC}{b}\cdot b!\cdot \frac{1}{\big(1-c_1z\big)^{\cC+b}}.
   \end{equation*}
  Since $\varphi(T) =T^{(b)}\big(z(T)\big)$, we get by using \eqref{eq:zOfT} the intermediate result	
	\begin{equation}
      \label{eqnd2}
      \varphi(T) = \frac{\psi_{1}}{c_2}\cdot c_1^b\cdot \binom{b-1+\cC}{b}\cdot b!\cdot \Big(1 + \frac{c_{2}}{\psi_{1}} T\Big)
			^{1+b\Cc}.
   \end{equation}
  Extracting coefficients, $\varphi_k=[T^k]\varphi(T)$, gives
   \begin{equation*}
	\begin{split}
      \varphi_k&=	\frac{\psi_{1}}{c_2}\cdot c_1^b\cdot \binom{b-1+\cC}{b}\cdot b!\cdot \binom{b\Cc+1}{k}\cdot \frac{c_2^k}{\psi_1^k}\\
		&	=	c_1^{b-1}\cdot \binom{b-1+\cC}{b-1}\cdot (b-1)!\cdot \binom{b\Cc+1}{k}\cdot \frac{c_2^k}{\psi_1^{k-1}}.
  \end{split} 
	\end{equation*}
   We can now check, whether the conditions
   $\varphi_{k} \ge 0$, for all $k \ge 0$, with $\varphi_{0} > 0$,
   for the degree-weight sequence are satisfied. To do this we distinguish several cases.

  \begin{itemize}
   \item First we consider the case $c_{2} > 0$: if $1 + b\Cc \not\in \mathbb{N}$, then it follows that there exists a $k \in \mathbb{N}$ such that $\binom{b\Cc+1}{k} < 0$. Since $c_{1} > 0$, this implies that $\varphi_{k} < 0$. Therefore, this case is not admissible.
   On the other hand, if $1 + b\Cc= : D \in \mathbb{N}$,
   then it follows that $\binom{D}{k} = 0$, for all $k > D$, and thus
   that $\varphi_{k} > 0$, for all $0 \le k \le D$, and $\varphi_{k} = 0$, for all $k > D$.
   Such degree-weight generating functions are admissible and are covered by the \Dit. 
	There, according to Lemma~\ref{LemScaling}, we might make the specific choice $\Cc=d-1$ and set $c_2=\psi_1=1$, starting with a single tree of size one.
  
	\smallskip
	
	\item Next, we consider the case $c_{2} < 0$: since $c_{1} + c_{2} > 0$, it follows
   that $\frac{c_{1}}{c_{2}} < -1$. 
	We set $\Cc=-(\alpha+1)<-1$. The specific choice $c_1=\alpha+1$, $-c_2=\psi_1=1$, then leads to
   \begin{equation*}
      \varphi_{k} = (\alpha+1)^{b-1}\binom{b-1-\frac{1}{\alpha+1}}{b-1}\cdot(b-1)!\cdot \binom{b(\alpha+1)+k-2}{k},
   \end{equation*}
   for all $k \ge 0$. Again, according to Lemma~\ref{LemScaling}, other choices yield equivalent random tree models. Apparently, such degree-weight generating functions are also admissible and are covered by \Port.
	
	\smallskip
	
	\item Eventually we consider the case $c_{2} = 0$, which gives
   \begin{equation*}
      T_{n} = T_{1} \prod_{k=1}^{n-1} (c_{1} k) = \psi_{1} c_{1}^{n-1} (n-1)!,
   \end{equation*}
   and
   \begin{equation}
      \label{eqnd5}
      T(z) = \sum_{n \ge 1} T_{n} \frac{z^{n}}{n!} = \frac{\psi_{1}}{c_{1}}
      \sum_{n \ge 1} \frac{(c_{1} z)^{n}}{n} = \frac{\psi_{1}}{c_{1}}
      \log\big(\frac{1}{1-c_{1}z}\big).
   \end{equation}
Equation~\eqref{eqnd5} gives
   \begin{equation}
      \label{eqnd6}
      T^{(b)}(z) = \frac{\psi_1}{c_1}\cdot c_1^b\cdot\frac{(b-1)!}{(1-c_{1} z)^b}.
   \end{equation}
   We obtain further
   \begin{equation}
      \label{eqnd7}
			\varphi(T)=T^{(b)}\big(z(T)\big)=
			\frac{\psi_1}{c_1}\cdot c_1^b (b-1)! \cdot \exp\Big(T\cdot b\frac{c_1}{\psi_0}\Big)
   \end{equation}
   and also
   \begin{equation}
	\label{eqnd8}
      \varphi_k=c_1^b\cdot\frac{c_1^{k-1}}{\psi_1^{k-1}}(b-1)!\frac{b^k}{k!},\quad k\ge 0.
			\end{equation}
   Since $c_{1} > 0$, we obtain from \eqref{eqnd8} that $\varphi_{k} > 0$, for all $k \ge 0$,
   and thus that all such degree-weight generating functions are admissible.
   They are covered by bucket recursive trees, where we made the special choice $c_1=\psi_1=1$, 
	such that we start again with a single tree of size one. Thus, according to Lemma~\ref{LemScaling}, without loss of generality 
	we have again characterized all possible weight sequences.

\item The remaining case $c_{1} = 0$ and $c_{2} > 0$ leads to
   $T_{n} = \psi_{1} c_{2}^{n-1}$ and to
   \begin{equation}
      \label{eqnd9}
      T(z) = \sum_{n \ge 1} T_{n} \frac{z^{n}}{n!} = \frac{\psi_{1}}{c_{2}}
      \sum_{n \ge 1} \frac{(c_{2} z)^{n}}{n!} = \frac{\psi_{1}}{c_{2}}
      \big(e^{c_{2} z} -1\big).
   \end{equation}
   Since \eqref{eqnd9} gives
   \begin{equation}
      T^{(b)}(z) = \psi_{1} c_2^{b-1}e^{c_{2} z},
   \end{equation}
   this leads to
   \begin{equation}
      \varphi(T) =  T^{(b)}\big(z(T)\big)= \psi_1c_2^{b-1} + c_2^b\cdot T.
   \end{equation}
   Here all trees are chains and this degenerate case is excluded from our further considerations
   due to the demand that there exists a $k \ge 2$ with $\varphi_{k} > 0$.
\end{itemize}

\medskip

\textbf{Case (ii)\impli (i).} 
We use Proposition~\ref{prop:DEQ_BIC} and \ref{PropEnum}: by the combinatorial specification of bucket increasing trees, the exponential generating function $T(z)$ of the total weights $T_{n}$ satisfies the non-linear differential equation given in~\eqref{eqna1}:
\[
   \frac{d^{b}}{d z^{b}} T(z)  = \varphi\big(T(z)\big), \; T(0)=0, \qquad T^{(k)}(0) = \psi_{k}, \quad \text{for} \enspace 1 \le k \le b-1.
\]
One can readily check that the generating functions stated in Proposition~\ref{PropEnum} solve the corresponding differential equations with weights specified in Definition~\ref{def0},\ref{def1} and \ref{def2}, respectively. 
Thus, the corresponding degree-weights and bucket-weights lead to the formulas for the total weights $T_n$ stated in Proposition~\ref{PropEnum} and it can be checked easily that they satisfy the asserted affine linear ratio.
Namely, for bucket recursive trees we get
\[
%T_n=(n-1)!,\quad 
\frac{T_{n+1}}{T_n}=n,\quad c_1=1,\quad c_2=0.
\]
For \Dit\, we obtain 
\[
%T_n=(n-1)!(d-1)^{n-1}\binom{n-1+\frac{1}{d-1}}{n-1}.,\quad 
\frac{T_{n+1}}{T_n}=(d-1)n+1\cdot ,\quad c_1=(d-1),\quad c_2=1.
\]
Finally, for \Port\, we get
\[
%T_n=(n-1)!(\alpha+1)^{n-1}\binom{n-1-\frac{1}{\alpha+1}}{n-1}.
\frac{T_{n+1}}{T_n}=(\alpha+1)n-1\cdot ,\quad c_1=(\alpha+1),\quad c_2=-1.
\]

\medskip

\textbf{Case: (i)\impli (vi).}
We will use the properties
\begin{equation}
\label{BINCeqn001}
   |\tilde{T}|= \sum_{u \in \tilde{T}} c(u) = \sum_{k=1}^{b-1} k m_{k} + b \sum_{k \ge 0} n_{k},
\end{equation}
where as before $m_{k}=m_k(\tilde{T})= |\{u \in \tilde{T} : c(u)=k<b\}|$ denotes the number of unsaturated nodes of $\tilde{T}$ with capacity $k<b$ and $n_{k} =n_k(\tilde{T})= |\{u \in \tilde{T} : c(u)=b \enspace \text{and} \enspace \grad(u)=k\}|$ the number of saturated nodes of $\tilde{T}$ with out-degree $k \ge 0$. We also require the relation
\begin{equation}
\label{BINCeqn002}
 1 = \sum_{k=1}^{b-1} m_{k} - \sum_{k \ge 0} (k-1)n_{k},
\end{equation}
which follows as the difference between the node-sum and edge-sum equation for the tree $\tilde{T}$:
\begin{equation*}
\text{$\#$ nodes} = \sum_{k=1}^{b-1} m_{k} + \sum_{k \ge 0} n_{k}, \qquad \text{$\#$ edges} = \text{$\#$ nodes} - 1 = \sum_{k \ge 0} k n_{k}.
\end{equation*}

Using equations~\eqref{BINCeqn001} and~\eqref{BINCeqn002}, we can easily verify that the balance condition is satisfied for the three combinatorial families of increasing trees.
\begin{itemize}
\item For bucket recursive trees with weights $\psi_{k} = (k-1)!$ and $\varphi_{k} = \frac{(b-1)! b^{k}}{k!}$ we obtain
\begin{equation*}
   \sum_{k=1}^{b-1} k m_{k} + \psi_{1} \sum_{k \ge 0} n_{k} (k+1) \frac{b}{k+1}
   = \sum_{k=1}^{b-1} k m_{k} + b \sum_{k \ge 0} n_{k} = n=: C_{n}
\end{equation*}

\item Next, we turn our attention to the family of \Dit{} and its weight-sequences. Here we get
\begin{equation*}
\begin{split}
   &\sum_{k=1}^{b-1} (k(d-1)+1) m_{k} + \sum_{k \ge 0} n_{k} (b(d-1)+1-k)\\
   &\, = (d-1)\big(\sum_{k=1}^{b-1} k m_{k} + b \sum_{k \ge 0} n_{k}\big) + \sum_{k=1}^{b-1} m_{k} - \sum_{k \ge 0} (k-1)n_{k} =(d-1)n+1 =: C_{n}.
\end{split}
\end{equation*}

\item Finally, for \Port{} we obtain
\begin{equation*}
\begin{split}
   &\sum_{k=1}^{b-1} (k(\alpha+1)-1) m_{k} + \sum_{k \ge 0} n_{k} ((\alpha+1)b-1+k)\\
   &\,= (\alpha+1)\big(\sum_{k=1}^{b-1} k m_{k} + b \sum_{k \ge 0} n_{k}\big) - \big(\sum_{k=1}^{b-1} m_{k} - \sum_{k \ge 0} (k-1)n_{k}\big) =(\alpha+1)n-1 =: C_{n}.
\end{split}
\end{equation*}
\end{itemize}

\medskip
 
\textbf{Case: (i)\impli (iii).} To prove that the choices of sequences $(\varphi_k)_{k\in\N}$ and $(\psi_k)_{1\le k\le b-1}$ given in Definition~\ref{def0}, \ref{def1} and \ref{def2}, respectively, are actually models for bucket increasing trees generated according to the respective stochastic growth rules, we have to show that the corresponding combinatorial families $\mathcal{T}$ of bucket increasing trees have the same stochastic growth rules as the counterparts created probabilistically. 
Given an arbitrary bucket increasing tree $T \in \mathcal{T}$ of size $|T|=n$, 
then the probability that a new element $n+1$ is attracted by a node $v \in T$, with capacity $c(v)=k$ and out-degree $\grad(v)$,
has to coincide with the corresponding probability stated in Definitions~\ref{def0}-\ref{def2}.

We use now the notation $T \to T'$ to denote that $T'$ is obtained from $T$ with $|T|=n$ by incorporating element $n+1$, i.e., either by attaching element $n+1$ to a saturated node $v \in T$ at one of the $\grad(v)+1$ possible positions
(recall that bucket increasing trees are ordered trees by definition and thus the order of the subtrees is of relevance)
by creating a new bucket of capacity $1$ containing element $n+1$ or by adding element $n+1$ to an unsaturated node $v \in T$ by increasing
the capacity of $v$ by $1$. If we want to express that node $v \in T$ has attracted the element $n+1$ leading from $T$ to $T'$ we use
the notation $T \xrightarrow{v} T'$. If there exists a stochastic growth rule for a bucket increasing tree family $\mathcal{T}$,
then it must hold that, for a given tree $T \in \mathcal{T}$ of size $|T|=n$ and a given node $v \in T$, the probability $p_{T}(v)$, which gives
the probability that element $n+1$ is attracted by node $v \in T$, is given as follows:
\begin{equation}
   p_{T}(v) = \frac{\sum_{T' \in \mathcal{T} : T \xrightarrow{v} T'} w(T')}{\sum_{\tilde{T} \in \mathcal{T} : T \to \tilde{T}} w(\tilde{T})}
   = \frac{\sum_{T' \in \mathcal{T} : T \xrightarrow{v} T'} \frac{w(T')}{w(T)}}{\sum_{\tilde{T} \in \mathcal{T} : T \to \tilde{T}}
   \frac{w(\tilde{T})}{w(T)}}.
\end{equation}
The remaining task is to simplify the expression above into the form stated in Definitions~\ref{def0}-\ref{def2}. For a certain tree $\tilde{T}$ with $T \xrightarrow{u} \tilde{T}$ and $u \in T$, the quotient of the weight of the trees $\tilde{T}$ and $T$
is due to the definition of bucket increasing trees given as follows (recall that we set $\psi_{b} := \varphi_{0}$):
\begin{equation*}
   \frac{w(\tilde{T})}{w(T)} =
   \begin{cases}
      \psi_{1} \frac{\varphi_{k+1}}{\varphi_{k}}, & \quad \text{for} \enspace c(u)=b \quad \text{and} \quad \grad(u)=k, \\
      \frac{\psi_{k+1}}{\psi_{k}}, & \quad \text{for} \enspace c(u)=k < b.
   \end{cases}
\end{equation*}
Then it holds
\begin{equation*}
   \sum_{\tilde{T} \in \mathcal{T} : T \to \tilde{T}} \frac{w(\tilde{T})}{w(T)}
   = \sum_{k=1}^{b-1} m_{k} \frac{\psi_{k+1}}{\psi_{k}} + \sum_{k \ge 0} n_{k} (k+1) \psi_{1} \frac{\varphi_{k+1}}{\varphi_{k}},
\end{equation*}
where we use that there are $k+1$ possibilities of attaching a new node to a saturated node $u \in T$ with out-degree
$\grad(u)=k$. This is exactly the expression occurring in the balance condition already established in the previous argumentation, where we also computed the resulting values $C_{n}=n$, $C_{n} = (d-1)n+$ and $C_{n}=(\alpha+1)n-1$, respectively. Let us treat the different tree families separately.
\begin{itemize}
\item First, if one chooses the weights $\psi_{k} = (k-1)!$ and $\varphi_{k} = \frac{(b-1)! b^{k}}{k!}$ as in the family of bucket recursive trees, we obtain
\begin{equation*}
   \sum_{\tilde{T} \in \mathcal{T} : T \to \tilde{T}} \frac{w(\tilde{T})}{w(T)} = n.
   %\sum_{k=1}^{b-1} k m_{k} + \psi_{1} \sum_{k \ge 0} n_{k} (k+1) \frac{b}{k+1}
   %= \sum_{k=1}^{b-1} k m_{k} + b \sum_{k \ge 0} n_{k} = n.
\end{equation*}
Furthermore by choosing these weights $\varphi_{k}$ and $\psi_{k}$ we get
\begin{equation*}
   \sum_{T' \in \mathcal{T} : T \xrightarrow{v} T'} \frac{w(T')}{w(T)} =
   \begin{cases}
      (k+1) \psi_{1} \frac{\varphi_{k+1}}{\varphi_{k}} = b, & \quad \text{for} \enspace c(v)=b \enspace \text{and} \enspace \grad(v)=k,\\
      \frac{\psi_{k+1}}{\psi_{k}} = k, & \quad \text{for} \enspace c(v)=k < b,
   \end{cases}
\end{equation*}
and thus
\begin{equation*}
   \sum_{T' \in \mathcal{T} : T \xrightarrow{v} T'} \frac{w(T')}{w(T)} = k, \quad \text{for} \enspace c(v)=k, \quad 1 \le k \le b.
\end{equation*}
Therefore, we have shown that by choosing the weight sequences $\psi_{k} = (k-1)!$ and $\varphi_{k} = \frac{(b-1)! b^{k}}{k!}$
the probability $p_{T}(v)$ that in a bucket increasing tree $T$ of size $|T|=n$ the node $v$ with capacity $c(v)=k$ attracts element $n+1$
is always given by $\frac{k}{n}$, which coincides with the stochastic growth rule for bucket recursive trees.

\item Next, we turn our attention to the family of \Dit{} and its weight-sequences. We have 
\begin{equation*}
   \sum_{T' \in \mathcal{T} : T \xrightarrow{v} T'} \frac{w(T')}{w(T)} =
   \begin{cases}
      %(k+1) \psi_{1} \frac{\varphi_{k+1}}{\varphi_{k}} =
			b(d-1)+1-k, & \text{for} \enspace c(v)=b \enspace \text{and} \enspace \grad(v)=k,\\
      %\frac{\psi_{k+1}}{\psi_{k}} = 
			k(d-1)+1, & \text{for} \enspace c(v)=k < b,
   \end{cases}
\end{equation*}
and we already know that
\begin{equation*}
\begin{split}
   \sum_{\tilde{T} \in \mathcal{T} : T \to \tilde{T}} \frac{w(\tilde{T})}{w(T)} &=(d-1)n+1.
   %\sum_{k=1}^{b-1} (k(d-1)+1) m_{k} + \sum_{k \ge 0} n_{k} (b(d-1)+1-k)\\
   %&= (d-1)\big(\sum_{k=1}^{b-1} k m_{k} + b \sum_{k \ge 0} n_{k}\big) + \sum_{k=1}^{b-1} m_{k} - \sum_{k \ge 0} (k-1)n_{k} =(d-1)n+1,
\end{split}
\end{equation*}
Thus, with this choice of weight sequences $(\varphi_k)_k$ and $(\psi_k)_k$, 
the probability $p_{T}(v)$ that in a bucket increasing tree $T$ of size $|T|=n$ the node $v$ with capacity $c(v)=k$ attracts element $n+1$
coincides with the corresponding probability of the stochastic growth rule for \Dit{}.

\item For the family of \Port{} we obtain
\begin{equation*}
   \sum_{T' \in \mathcal{T} : T \xrightarrow{v} T'} \frac{w(T')}{w(T)} =
   \begin{cases}
      %(k+1) \psi_{1} \frac{\varphi_{k+1}}{\varphi_{k}} = 
			(\alpha+1)b-1+k, & \text{for} \enspace c(v)=b \enspace \text{and} \enspace \grad(v)=k, \\
      %\frac{\psi_{k+1}}{\psi_{k}} =  
			k(\alpha+1)-1, & \text{for} \enspace c(v)=k < b,
   \end{cases}
\end{equation*}
and we already gained that
\begin{equation*}
\begin{split}
   \sum_{\tilde{T} \in \mathcal{T} : T \to \tilde{T}} \frac{w(\tilde{T})}{w(T)} &=(\alpha+1)n-1.
   %\sum_{k=1}^{b-1} (k(\alpha+1)-1) m_{k} + \sum_{k \ge 0} n_{k} ((\alpha+1)b-1+k)\\
   %&= (\alpha+1)\big(\sum_{k=1}^{b-1} k m_{k} + b \sum_{k \ge 0} n_{k}\big) - \big(\sum_{k=1}^{b-1} m_{k} - \sum_{k \ge 0} (k-1)n_{k}\big) %=(\alpha+1)n-1,
\end{split}
\end{equation*}
%due to equations~\eqref{BINCeqn001} and \eqref{BINCeqn002}.
Again, with this choice of weight sequences $(\varphi_k)_k$ and $(\psi_k)_k$, it follows that
the probability $p_{T}(v)$ that in a bucket increasing tree $T$ of size $|T|=n$ the node $v$ with capacity $c(v)=k$ attracts element $n+1$
coincides with the corresponding probability in the stochastic growth rule for \Port{}.
\end{itemize}

\medskip

\textbf{Case (iii)\impli (iv).} This is evidently true, as the tree evolution processes given in Definitions~\ref{def0}-\ref{def2} explicitly state the probabilities $p_{T'}(v)$ for all vertices $v\in T'$ of size $n$, and the resulting tree $T$ of size $n+1$ is again random, as the tree is created in a step-by-step fashion according to the tree evolution process.

\medskip

\textbf{Case (iv)\impli (v).} It is sufficient to show that, starting with a size-$n$ random tree and removing node $n$, the resulting tree of size $n-1$ is again random. But this is obviously true, as the tree evolution processes generate random trees in a step-by-step fashion and the underlying size-$(n-1)$ tree was random. 

\medskip

\textbf{Case (iv)\impli (vi).} We assume that, for every tree $T'\in\mathcal{T}$ of size $n$ with vertices $v'\in V(T')$, there exist such probabilities $p_{T'}(v')$; of course, $\sum_{v'\in V(T')}p_{T'}(v')=1$. Thus, starting with a random tree $T'$ of size $n$ and attaching the new label $n+1$ to one of the nodes $v\in V(T')$, either for $c(v)=b$ equally likely at any of the $\deg(v)+1$ different positions, or by inserting into nodes $v$ of capacity $c(v)<b$, leads then to a random tree $T$ of size $n+1$. 

\smallskip 

Given two trees $T',T''\in\mathcal{T}$, both of size $n$, with vertices $V(T')$ and $V(T'')$. Due to our assumption,
$\forall v \in V(T')\colon\ \exists \ p_{T'}(v)$ such that $\sum_{v\in V(T')}p_{T'}(v)=1$,
as well as $\forall v \in V(T'')\colon\ \exists \ p_{T''}(v)$ such that $\sum_{v\in V(T'')}p_{T''}(v)=1$. Attaching label $n+1$ to a fully saturated vertex $v\in V(T')$, $c(v)=b$, at any of the $\deg(v)+1$ positions, gives a tree $\tilde{T}'$ of size $n+1$ with weight
\[
w(\tilde{T}')=\psi_1\frac{\varphi_{\deg(v)+1}}{\varphi_{\deg(v)}}\cdot w(T').
\]
Likewise, attaching label $n+1$ to a vertex $v\in V(T')$ with $c(v)<b$ gives a tree $\tilde{T}'$ of size $n+1$ with weight
\[
w(\tilde{T}')=\frac{\psi_{c(v)+1}}{\psi_{c(v)}}\cdot w(T').
\]
Analogous considerations are valid when attaching label $n+1$ to the tree $T''$ obtaining a tree $\tilde{T}''$.
On the other hand, we can start with random bucket increasing trees $T',T''$ of size $n$ chosen due to the random tree model and attach label $n+1$ according to the probabilities $p_{T'}(v)$ and $p_{T''}(v)$. This leads to the following probabilities
of obtaining trees $\tilde{T}'$ and $\tilde{T}''$:
\[
\P\{\tilde{T}'\}=
\P\{T'\}\cdot 
\begin{cases}
p_{T'}(v), & \quad c(v)<b,\\
\frac{p_{T'}(v)}{\deg(v)+1}, & \quad c(v)=b,
\end{cases}
\]
and
\[
\P\{\tilde{T}''\}=
\P\{T''\}\cdot 
\begin{cases}
p_{T''}(v), & \quad c(v)<b,\\
\frac{p_{T''}(v)}{\deg(v)+1}, & \quad c(v)=b.
\end{cases}
\] 
Since the resulting trees must be random bucket increasing trees of size $n+1$, such that 
\[
\P\{\tilde{T}'\}=\frac{w(\tilde{T}')}{T_n},\quad
\P\{\tilde{T}''\}=\frac{w(\tilde{T}'')}{T_n},
\]
consequently it holds
\[
\frac{\P\{\tilde{T}'\}}{\P\{\tilde{T}''\}}
=\frac{w(\tilde{T}')}{w(\tilde{T}')}.
\]
This leads to four different equations, 
distinguishing between the capacities of the nodes $v'\in V(T')$ and $v''\in V(T'')$ attracting the label $n+1$. 
For $c(v')=c(v'')=b$ we get
\begin{align*}
\frac{\psi_1\frac{\varphi_{\deg(v')+1}}{\varphi_{\deg(v')}}\cdot w(T')}{\psi_1\frac{\varphi_{\deg(v'')+1}}{\varphi_{\deg(v'')}}\cdot w(T'')}=\frac{\frac{p_{T'}(v')}{\deg(v')+1}\cdot w(T')}{\frac{p_{T''}(v'')}{\deg(v'')+1}\cdot w(T'')}.
\end{align*}
For $c(v')<b$ and $c(v'')=b$ we get
\begin{align*}
\frac{\frac{\psi_{c(v')+1}}{\psi_{c(v')}}\cdot w(T')}{\psi_1\frac{\varphi_{\deg(v'')+1}}{\varphi_{\deg(v'')}}\cdot w(T'')}=\frac{p_{T'}(v')\cdot w(T')}{\frac{p_{T''}(v'')}{\deg(v'')+1}\cdot w(T'')},
\end{align*}
and a similar equation for $c(v')=b$ and $c(v'')<b$. The final case 
$c(v')<b$ and $c(v'')<b$ gives
\begin{align*}
\frac{\frac{\psi_{c(v')+1}}{\psi_{c(v')}}\cdot w(T')}{\frac{\psi_{c(v'')+1}}{\psi_{c(v'')}}\cdot w(T'')}=\frac{p_{T'}(v')\cdot w(T')}{p_{T''}(v'')\cdot w(T'')}.
\end{align*}
By considering vertices $v',u'\in V(T')$ with $c(v')=b$, $c(u')<b$, vertices $v'',u''\in V(T'')$ with $c(v'')=b$, $c(u'')<b$, and taking into account the four possible cases, we obtain, 
\begin{align*}
C_n := & \frac{1}{p_{T'}(v')}\cdot\psi_1\cdot\frac{(\deg(v')+1) \, \varphi_{\deg(v')+1}}{\varphi_{\deg(v')}}
=\frac{1}{p_{T'}(u')}\cdot\frac{\psi_{c(u')+1}}{\psi_{c(u')}}\\
&=\frac{1}{p_{T''}(v'')}\cdot\psi_1\cdot\frac{(\deg(v'')+1) \, \varphi_{\deg(v'')+1}}{\varphi_{\deg(v'')}}
=\frac{1}{p_{T''}(u'')}\cdot\frac{\psi_{c(u'')+1}}{\psi_{c(u'')}}.
\end{align*}
Multiplication with $p_{T'}(v')$ or $p_{T'}(u')$ and summing up over all vertices $v\in V(T')$ 
then gives 
\begin{align*}
C_n&=\sum_{v \in V(T')}p_{T'}(v)\cdot C_n=\sum_{k=1}^{b-1} m_{k} \frac{\psi_{k+1}}{\psi_{k}} + \sum_{k \ge 0} n_{k} (k+1) \psi_{1} \frac{\varphi_{k+1}}{\varphi_{k}},
\end{align*}
which is the stated balance condition.

\medskip

\textbf{Case (v)\impli (vi).} We consider families $\mathcal{T}$ with the property that, when starting with a random tree of $T\in\mathcal{T}$ of size $n$ and removing all labels larger than $j$, we obtain a random tree of $T'$ of size $j$.
By iterating the argument, it is sufficient to assume that, after removing label $n$ in a random tree $T$ of size $n$, we get a random tree of $T'$ of size $n - 1$. This randomness preserving property
can be described via the equation
\begin{equation}
\label{eq:v-vi:1}
\frac{w(T')}{w(T'')}=\frac{\sum_{T \in \mathcal{T} : T \xrightarrow{(n)} T'} w(T)}{\sum_{T \in \mathcal{T} : T \xrightarrow{(n)}  T''} w(T)},
\end{equation}
which must hold for all bucket ordered trees $T',T''\in\mathcal{T}$ of size $n-1$ and bucket ordered trees $T\in\mathcal{T}$ of size $n$. 
Here, $ T \xrightarrow{(n)} T'$ describes the fact that by removing node $n$ from $T$ we obtain $T'$. We assume now that $T'$ is obtained from $T$ by removing label $n$, which was either contained in a node $u$ or attached to a saturated node $v$. 
We obtain then an equivalent characterization by considering all possible trees $T\in\mathcal{T}$ such that $T \xrightarrow{(n)} T'$ or 
$T \xrightarrow{(n)} T''$ and their weights. 
The left hand side of the resulting equation is readily obtained from the definition of the weight of a tree,
\begin{align*}
\frac{w(T')}{w(T'')}&=\frac{\bigg(\prod_{u\in T'\colon c(u)<b} \psi_{c(u)} \bigg) \cdot \bigg(\prod_{v\in T'\colon c(v)=b}\varphi_{\grad(v)}\bigg)}{\bigg(\prod_{u\in T''\colon c(u)<b} \psi_{c(u)} \bigg) \cdot \bigg(\prod_{v\in T''\colon c(v)=b}\varphi_{\grad(v)}\bigg)},
\end{align*}
whereas the right hand side is obtained by considering all nodes in $T'$ and $T''$, respectively, and the change in the respective weights if label $n$ is attached,
\begin{align*}
\frac{\sum_{T \in \mathcal{T} : T \xrightarrow{(n)} T'} w(T)}{\sum_{T \in \mathcal{T} : T \xrightarrow{(n)}  T''} w(T)}
&=\frac{\bigg(\prod_{u\in T'\colon c(u)<b} \psi_{c(u)} \bigg) \cdot \bigg(\prod_{v\in T'\colon c(v)=b}\varphi_{\grad(v)}\bigg)
}{\bigg(\prod_{u\in T''\colon c(u)<b} \psi_{c(u)} \bigg) \cdot \bigg(\prod_{v\in T''\colon c(v)=b}\varphi_{\grad(v)}\bigg)}\\
&\ \times\frac{\bigg(\sum_{k=1}^{b-1} m_{k}(T') \frac{\psi_{k+1}}{\psi_{k}} + \sum_{k \ge 0} n_{k}(T') (k+1) \psi_{1} \frac{\varphi_{k+1}}{\varphi_{k}}\bigg)}{\bigg(\sum_{k=1}^{b-1} m_{k}(T'') \frac{\psi_{k+1}}{\psi_{k}} + \sum_{k \ge 0} n_{k}(T'') (k+1) \psi_{1} \frac{\varphi_{k+1}}{\varphi_{k}}\bigg)}.
\end{align*}
This equation holds for all ordered trees $T',T''\in\mathcal{T}$ of size $n-1$. Thus,~\eqref{eq:v-vi:1} implies the balance condition,
\begin{equation*}
  \sum_{k=1}^{b-1} m_{k} \frac{\psi_{k+1}}{\psi_{k}} + \sum_{k \ge 0} n_{k} (k+1) \psi_{1} \frac{\varphi_{k+1}}{\varphi_{k}} =: C_{n-1},
\end{equation*}
with $C_{n-1}$ being independent of the particular tree of size $n-1$.

\medskip

\textbf{Case (vi)\impli (i).} We determine, which bucket weight and degree weight sequences $(\psi_k)_{1 \le k \le b-1}$ and $(\varphi_k)_{k \ge 0}$ satisfy the balance equation. We define
\begin{equation}
\label{eq:A0}
\beta_k=\frac{\psi_{k+1}}{\psi_k},\ 1\le k\le b-1,\quad
\gamma_k=\psi_1\cdot(k+1)\frac{\varphi_{k+1}}{\varphi_{k}},\ k\ge 0,
\end{equation}
(recall that we set $\psi_{b} = \varphi_{0}$) and the balance equation gets the form
\[
C_n=\sum_{k=1}^{b-1} m_{k}\beta_k + \sum_{k \ge 0} n_{k} \gamma_k. 
\]
Next we consider specific shapes of bucket increasing trees to determine 
all sequences $(\beta_k)_{1 \le k \le b-1}$ and $(\gamma_k)_{k \ge 0}$, satisfying this equation.
First, we consider trees of size $n=b(b+1)$. For $1\le k\le b$ let $T=T(k)$ denote a chain of $b$ fully saturated nodes. 

\begin{figure}[!htb]
\includegraphics[height=6cm]{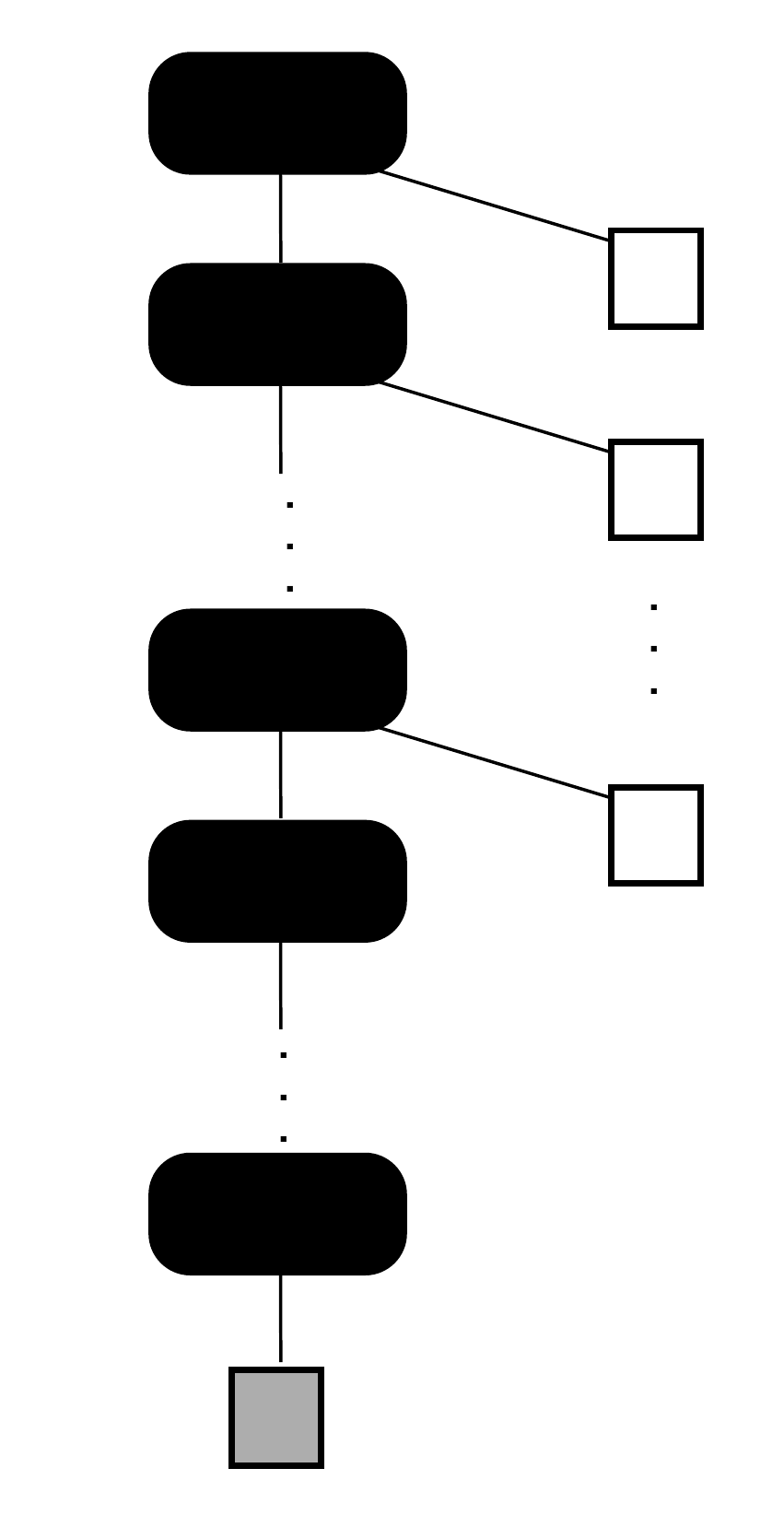}%picsDC/
\caption{Trees $T=T(k)$: $b$ black nodes with capacity $b$, $b-k$ white leaves of capacity $1$, and a single gray leaf of capacity $k$.}
\label{}%
\end{figure}

The first $b-k$ nodes additionally have attached to them a single label, and at the last node of the chain there is attached a node of capacity $c(v)=k$. In the case $k=b$ we have a tree of $b+1$ nodes of capacity $c(v)=b$.
The balance condition applied to these $b$ trees $T=T(k)$ yields the system of equations
\begin{equation}
\label{eq:A1}
\begin{split}
C_n&=(b-k)\beta_1 + \beta_k +(b-k)\gamma_2+k\gamma_1,\quad 1\le k\le b-1 \\
C_n&= b \gamma_1+\gamma_0,\quad k=b.
\end{split}
\end{equation}
In particular, for $k=1$ we get
\[
C_n=b\beta_1 + (b-1)\gamma_2+\gamma_1.
\]
Combining this equation with~\eqref{eq:A1}
gives the equation
\[
k\beta_1 - \beta_k +(k-1)\gamma_2-(k-1)\gamma_1=0,
\]
leading to the recurrence relation
\begin{equation}
\label{eq:A2}
\beta_k=k(\beta_1 +\gamma_2-\gamma_1)+\gamma_1-\gamma_2,\quad 1\le k\le b-1.
\end{equation}
Moreover, evaluating~\eqref{eq:A1} at $k=1$ leads to
\begin{equation}
\label{eq:A3}
\gamma_0%=-(b-1)\gamma_1+(b-1)\gamma_2-(b-1)\beta_1
=b(\beta_1-\gamma_1+\gamma_2)+\gamma_1-\gamma_2.
\end{equation}

\smallskip

\begin{figure}[!htb]
\begin{center}
\begin{minipage}{10cm}
\includegraphics[height=4cm]{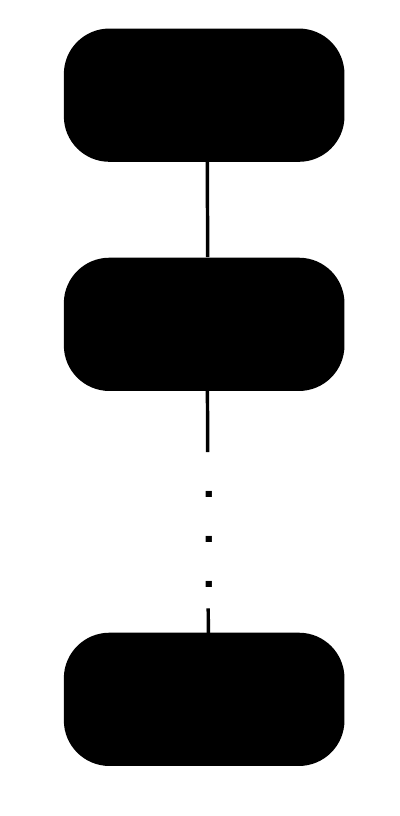}%picsDC/
\hspace{1cm}
\includegraphics[scale=0.5]{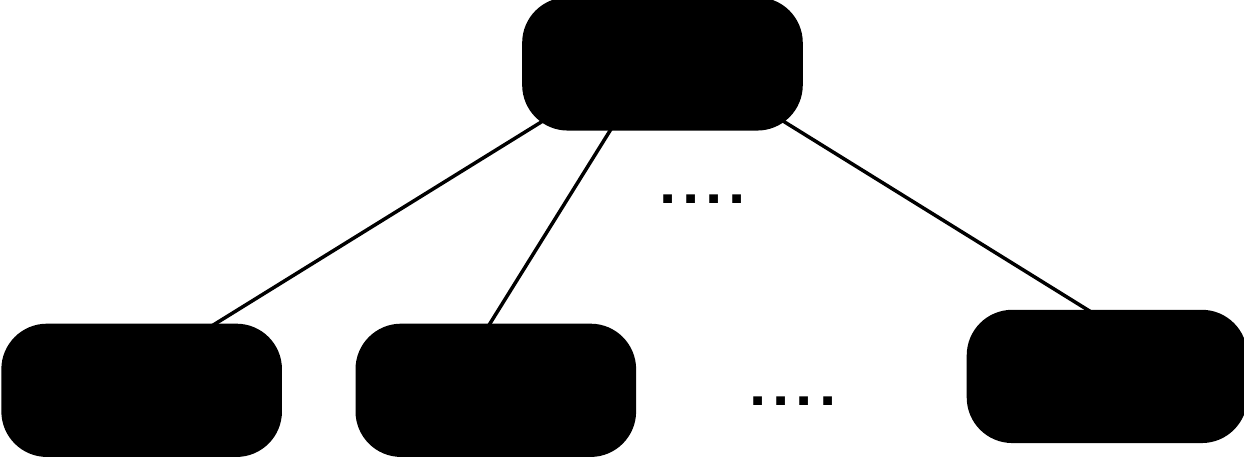}%picsDC/
\end{minipage}
%\vspace*{-6mm}
\caption{Two shapes of trees: chains $K$ of $k+1$ fully saturated nodes and stars $S$ of $k+1$ fully saturated nodes.\label{DiamondFig1} }
\end{center}
\end{figure}

Next we consider two different tree shapes, namely the following two infinite sequences of trees. We look at stars $S=S(k+1)$ of size $n=(k+1)b$, consisting of a root plus $k$ saturated nodes attached to the root, and chains $K=K(k+1)$ of size $n=(k+1)b$, consisting of $k$ saturated nodes attached to each other.   
The stars lead to the balance equation
\begin{equation}
\label{eq:A4}
\gamma_k+k\gamma_0=C(n),
\end{equation}
whereas the chains gives
\begin{equation}
\label{eq:A5}
k\gamma_1+\gamma_0=C(n).
\end{equation}
Combining~\eqref{eq:A4} and \eqref{eq:A5} leads to the recurrence relation
\begin{equation}
\label{eq:A6}
\gamma_k=k(\gamma_1-\gamma_0)+\gamma_0,\quad k\ge 0.
\end{equation}

In particular we obtain for $k=2$ the equation
\[
\gamma_2=2(\gamma_1-\gamma_0)+\gamma_0,
\]
which implies that
\[
\gamma_2-\gamma_1=\gamma_1-\gamma_0.
\]
Using this relation, we can simplify equations~\eqref{eq:A2} and~\eqref{eq:A3} obtaining
\begin{equation}
\begin{split}
\label{eq:A7}
\beta_k&=k(\beta_1 +\gamma_1-\gamma_0)+\gamma_0-\gamma_1,\quad 1\le k\le b-1,\\
\gamma_0&=b(\beta_1+\gamma_1-\gamma_0)+\gamma_0-\gamma_1.
\end{split}
\end{equation}
The latter equation allows to express $\beta_1$ in terms of $\gamma_0$ and $\gamma_1$:
\begin{equation*}
\beta_1=\frac{1}{b}\cdot\gamma_1+\gamma_0-\gamma_1,
\end{equation*}
as well as to obtain 
\[
\beta_1+\gamma_1-\gamma_0=\frac{1}{b}\cdot\gamma_1.
\]
From~\eqref{eq:A6} and~\eqref{eq:A7} we finally obtain the characterizing equations
\begin{equation}
\begin{split}
\label{eq:A8}
\beta_k&=\frac{k}{b}\cdot\gamma_1+\gamma_0-\gamma_1,\quad 1\le k\le b-1,\\
\gamma_k&=k(\gamma_1-\gamma_0)+\gamma_0,\quad k\ge 0.
\end{split}
\end{equation}
We solve \eqref{eq:A8} and distinguish according to the sign of $\gamma_1-\gamma_0$ obtaining three different cases.
\begin{itemize}
\item First, let $\gamma_1-\gamma_0=0$. We set $\gamma_0=a$ and obtain
\[
\gamma_k=a,\quad \beta_k=k\cdot\frac{a}{b}.
\]
From~\eqref{eq:A0} we obtain 
\[
\psi_{k+1}=k\cdot\frac{a}{b}\psi_k,\ 1\le k\le b-1,\quad
\varphi_{k+1}=\frac{a}{(k+1)\psi_1} \varphi_k,\ k\ge 0,
\]
leading to 
\begin{equation*}
\begin{split}
\psi_k & = (k-1)!\frac{a^{k-1}}{b^{k-1}}\psi_1,\quad 1 \le k \le b-1,\\ 
\varphi_k & = \frac{a^k}{k!\psi_1^k}\varphi_0=\frac{a^k}{k!\psi_1^k}\cdot(b-1)!\frac{a^{b-1}}{b^{b-1}}\psi_1, \quad k\ge 0.
\end{split}
\end{equation*}
Thus, this case gives the family of bucket recursive trees, where setting $a=b$ and $\psi_1=1$ yields the representation used in Definition~\ref{def0}.

\smallskip

\item Next, we assume that $\gamma_1-\gamma_0<0$. 
Due to our assumption $\varphi_k\ge 0$, there has to exist a $D\in\N$ 
such that $\gamma_D=D(\gamma_1-\gamma_0)+\gamma_0=0$. In the following we exclude the degenerate case $D=1$, leading to chains.
We get $\gamma_1=\frac{D-1}{D}\gamma_0$. We set again $\gamma_0=a$, leading to
\[
\gamma_k=\frac{a}{D}(D-k),
\]
as well as
\[
\beta_k=\frac{a(D-1)}{b D }\cdot\Big(\frac{b}{D-1}+k\Big).
\]
By~\eqref{eq:A0} we obtain the recurrence relations for $\psi_k$ and $\varphi_k$ leading to
\begin{align*}
\psi_k&=\frac{a^{k-1}(D-1)^{k-1}}{b^{k-1}D^{k-1}}\fallfak{\big(k-1+\frac{b}{D-1}\big)}{k-1}\cdot\psi_1, \quad 1\le k\le b,\\
\varphi_k&= \frac{a^k\fallfak{D}{k}}{D^k\psi_1^k k!}\cdot \psi_b, \quad k \ge 0.
\end{align*}
Setting $D=b(d-1)+1$ leads then to 
\begin{align*}
\psi_k&=\frac{a^{k-1}(d-1)^{k-1}}{(b(d-1)+1)^{k-1}}(k-1)!
\binom{k-1+\frac{1}{d-1}}{k-1}\cdot\psi_1, \quad 1\le k\le b,\\
\varphi_k&= \frac{a^{k}}{(b(d-1)+1)^{k}\psi_1^{k}}\binom{b(d-1)+1}{k}\\
& \qquad \times \frac{a^{b-1}d^{b-1}}{(b(d-1)+1)^{b-1}}(b-1)!\binom{b-1+\frac{1}{d-1}}{b-1}, \quad k \ge 0.
\end{align*}
Thus, this case yields the family of \dit, where specializing $\psi_1=1$ and $a=b(d-1)+1$ leads to the representation given in Definition~\ref{def1}.

\smallskip

\item Finally, we assume that $\gamma_1-\gamma_0>0$. We set $c=\gamma_1-\gamma_0$ and again $\gamma_0=a$. Proceeding as before first gives 
\begin{align*}
\gamma_k=c\Big(k+\frac{a}{c}\Big),\quad \beta_k=\frac{a+c}{b}\cdot\Big(k-\frac{bc}{a+c}\Big).
\end{align*}
Note that $\beta_1>0$ implies
\[
1-\frac{bc}{a+c}>0 \quad \text{and} \quad \frac{a+c}{bc}>1.
\]
Due to \eqref{eq:A0} we obtain recurrence relations for $\psi_k$ and $\varphi_k$, from which we finally obtain
\begin{align*}
\psi_k&= \frac{(a+c)^{k-1}}{b^{k-1}}\cdot (k-1)!\cdot\binom{k-1-\frac{bc}{a+c}}{k-1}\cdot\psi_1, \quad 1\le k\le b-1\\
\varphi_k&=\frac{c^k}{\psi_1^k}\binom{k-1+\frac{a}{c}}{k}\cdot \frac{(a+c)^{b-1}}{b^{b-1}}(b-1)!\binom{b-1-\frac{bc}{a+c}}{b-1}\cdot\psi_1, \quad k\ge 0.
\end{align*}
Thus, this case gives the family of \port, where specializing $\frac{a+c}{bc}=\alpha+1$ with $\alpha>0$ and additionally $\psi_1=c=1$ leads to the representation used in Definition~\ref{def2}.
\end{itemize}
\end{proof}

\section{Applications and Outlook}
We finish this work by presenting immediate applications and discussing a few lines of further research.

\subsection{Applications}
In the following we discuss the random variable $Y_{n,j}$, which counts the number of descendants of element $j$, i.e., the total number of elements with a label greater or equal $j$ contained in the subtree rooted with the bucket containing element $j$, in a random bucket increasing tree of size $n$ (with maximal bucket size $b$). For this random variable,
the exact distribution, limit laws, as well as a decomposition of the random variable of interest in terms of the initial bucket size $K_j$ (i.e., the size of the bucket containing label $j$ after its insertion)
was provided in~\cite{BucketPanKu2009,BucketPanKuAccepted}, generalizing earlier results for ordinary increasing trees~\cite{IncDesc}. In particular, phase transitions of the limit law depending of the growth of $j=j(n)$ with respect to the total number of labels $n$ were observed. Below we provide a much more detailed insight into the distribution of $Y_{n,j}$, for fixed $j\ge b+1$, refining the previously observed beta limit law for $Y_{n,j}$. Note that for $1\le j\le b$ the number of descendants naturally degenerates: $Y_{n,j}=n+1-j$. Crucially for this refinement are the growth processes presented, which generate bucket increasing trees in a step-by-step fashion. In turn, we can generalize the correspondence between descendants in increasing trees and so-called P\'olya-Eggenberger urn models (see \cite{KuPa2014} and references therein) to bucket increasing trees.

\smallskip

We consider label $j$ during the respective tree evolution process generating random trees of the tree families studied. 
We distinguish between two events: either a new label is attached to the subtree rooted at the bucket containing $j$ (for short, attached to the subtree rooted $j$) or it is attached elsewhere. These two possibilities translate into the ball replacement matrix of a two-colour urn model.
In contrast to ordinary increasing trees, case $b=1$, the urn model has for $b>1$ already random initial configurations
as the size $K_j$ of the bucket containing label $j$ at the insertion of this label is a random variable itself. The support 
of $K_j$ is given in terms of the bucket size $b$ and equals $\{1,2,\dots,b\}$; see~\cite{BucketPanKuAccepted} for the probability mass function of $K_j$. In the following we outline the procedure for {\dit} taking into account Definition~\ref{def1} (the other tree families can be treated in a similar way). Consider such a $(b,d)$-ary bucket increasing tree of size $j$: there are $1+(d-1)j$ possible attachment positions, where a new label can be inserted. Exactly $1+(d-1)K_{j}$ such positions are contained in the subtree rooted at label $j$, whereas the other $(d-1)(j-K_{j})$ are not. In the urn model description we will use balls of two colours, black and white. Each white ball will correspond to a position contained in the subtree rooted $j$, whereas each black ball will correspond to a position that is not contained in the subtree rooted $j$. This already describes the initial conditions of the urn. During the tree evolution process, when attaching a new node to a position in the subtree of $j$, then there appear $d-1$ new positions in this subtree, whereas when attaching a new node to a position in the remaining tree, then there appear $d-1$ new positions in the remaining tree. In the urn model description this simply means that when drawing a white ball one adds $d-1$ white balls and when drawing a black ball one adds $d-1$ black balls to the urn. After $n-j$ draws, which correspond to the $n-j$ attachments of nodes in the tree, the number of white balls in the urn is linearly related to the size of the subtree rooted $j$ in a tree of size $n$.
Thus, the following urn model description immediately follows.
\begin{urn}[Urn models for descendants of label $j$]
Consider a balanced P\'olya urn with ball replacement matrix
\[
M = \left(
\begin{matrix}
\sigma & 0\\
0 & \sigma
\end{matrix}
\right), \qquad \text{where} \quad 
\sigma =
\begin{cases}
1, & \text{for } \Rec,\\
d-1, & \text{for } \dit,\\
\alpha+1, & \text{for } \port,\\
\end{cases}
\]
and initial conditions determined by the random initial bucket size $K_j$:
\[
W_0= 
\begin{cases}
K_j, \\ %1
(d-1)K_j+1,\\ %d
(\alpha+1)K_j-1,\\  %\alpha
\end{cases}
\qquad 
B_0= 
\begin{cases}
j-K_j, & \text{for } \Rec,\\ %j-1
(d-1)(j-K_j), & \text{for } \dit,\\ %(j-1)(d-1)
(\alpha+1)(j-K_j), & \text{for } \port,\\
\end{cases}
\]
with $1\le j\le n$. The number $Y_{n,j}$ of descendants of node $j$ in an increasing tree of size $n$ has the same distribution as the (shifted and scaled) number of white balls $W_{n-j}$ in the P\'olya urn after $n-j$ draws, i.e.,
\[
Y_{n,j}\law 
\begin{cases}
W_{n-j}-K_{j}+1, & \text{for } \Rec,\\
(W_{n-j}-1)/\sigma-K_{j}+1, & \text{for } \dit,\\
(W_{n-j}+1)/\sigma-K_{j}+1, & \text{for } \port.
\end{cases}
\]
\end{urn}

Let us consider $Y_{n,j}$ for fixed $j$. It is already known~\cite{IncDesc,BucketPanKu2009,BucketPanKuAccepted} that 
\begin{equation}\label{eqn:Ynj_Beta}
\frac{Y_{n,j}}{n}\claw \beta(K_j + \kappa, j - K_j), \quad \text{with} \quad 
\kappa=
\begin{cases}
0, & \text{for } \Rec,\\
\frac{1}{d-1}, & \text{for } \dit,\\
-\frac{1}{\alpha+1}, & \text{for } \port,
\end{cases}
\end{equation}
which we denote by
\begin{equation}
\label{LL1}
Y_{n,j} \sim n\cdot \beta(K_j + \kappa, j - K_j).
\end{equation}
We note that the beta limit law and the distributional convergence can readily be reobtained and strengthened in a few lines using 
discrete martingales. Let $N\ge 1$ denote the discrete time and $\mathcal{F}_{N}$ the $\sigma$-algebra generated by the first $N$ draws from the urn. Then 
\[
\E(W_N | \mathcal{F}_{N-1})=W_{N-1} + \sigma\frac{W_{N-1}}{T_{N-1}}=\frac{T_N}{T_{N-1}}\cdot W_{N-1},
\]
where the total number of balls $T_N=W_N+B_N$ after $N$ draws is given by $T_N=\sigma N +T_0$.
Consequently, $\mathcal{W}_N=W_N/T_N$ is a non-negative martingale and converges almost surely. This further implies that $W_N/(\sigma N)$ also converges almost surely.
Moreover, for integers $s\ge 1$, the binomial moments $\binom{W_N/\sigma+s-1}{s}$ satisfy
\begin{equation*}
\begin{split}
\E\bigg(\left.\binom{\frac{W_N}{\sigma}+s-1}{s}\right|\mathcal{F}_{N-1}\bigg)&=\binom{\frac{W_{N-1}}{\sigma}+s-1}{s} +\frac{W_{N-1}}{T_{N-1}}\cdot \binom{\frac{W_{N-1}}{\sigma}+s-1}{s-1}\\
&=\binom{\frac{W_{N-1}}{\sigma}+s-1}{s}\cdot \frac{T_{N+s-1}}{T_{N-1}}.
\end{split}
\end{equation*}
Consequently, 
\[
\mathcal{W}_{N,s}=\frac{\binom{\frac{W_N}{\sigma}+s-1}{s}}{T_{N+s-1}\cdots T_N}
\]
is a martingale and we also obtain the moments
\[
\E\bigg(\binom{\frac{W_N}{\sigma}+s-1}{s}\bigg)=\binom{\frac{W_0}{\sigma}+s-1}{s}\cdot\frac{T_{N+s-1}\cdots T_N}{T_{s-1}\cdots T_0},
\]
such that, for $N\to\infty$, it holds
\[
\E(W_N^s)\sim \sigma^s\cdot\frac{N^s\binom{\frac{W_0}\sigma +s-1}{s}}{\binom{\frac{T_0}\sigma+s-1}{s}}.
\]
This directly leads to the beta limit law via an application of the method of moments.

\smallskip

Let $\hat{Y}_{n,j}^{[K]}$ denote the conditional version of $Y_{n,j}$ on the event $\{K_j=K\}$, with $K\in\{1,\dots, b\}$.
As mentioned before, the limit laws of the urn model can be translated directly to gain a refinement of~\eqref{LL1} establishing almost-sure convergence. Moreover, one may use again discrete martingales to study the difference of $\hat{Y}_{n,j}^{[K]}$ and $n\cdot\beta$, with $\beta=\beta(K + \kappa, j - K)$ denoting the almost sure beta limit law and $\kappa$ given in~\eqref{eqn:Ynj_Beta}. Such random centerings frequently occur in martingale theory and are called martingale tail sums. The results of Heyde~\cite{Heyde1977}, see also~\cite{hallheyde80} and \cite{HK}, imply the following refinement of the beta limit law~\cite{IncDesc,BucketPanKu2009,BucketPanKuAccepted}:
\begin{equation}
\label{LL2}
\hat{Y}_{n,j}^{[K]} \sim n\cdot \beta+c\cdot \sqrt{\beta(T_0-\beta)}\sqrt{n} \, \mathcal{N},
\end{equation}
where $c$ denotes a constant, $\beta=\beta(K + \kappa, j - K)$ the almost sure beta limit law and $\mathcal{N}$ a standard Gaussian random variable. 
Written in a more conventional notation, we gain, for $n\to\infty$, the following convergence in distribution result:
\[
\sqrt{n}\cdot \Big(\frac{\hat{Y}_{n,j}^{[K]}}{n}-\beta\Big)\claw c\cdot \sqrt{\beta(T_0-\beta)} \, \mathcal{N}.
\]
Moreover, a law of the iterated logarithm for $\hat{Y}_{n,j}^{[K]}$ can be obtained also by a direct translation of corresponding results for the respective urn models~\cite{Heyde1977,HK}. Finally, we note in passing that the outdegree distribution of label $j$ in {\port} can be treated in a similar manner relying on the precise second order results for so-called triangular urn models~\cite{HK}.

\subsection{Outlook}
Split trees and bucket increasing trees: Random split trees were introduced by Devroye~\cite{DevroyeA} as rooted trees generated
by a certain recursive procedure using a stream of balls added to the root. This model encompasses a great many other tree models, 
including binary search trees, $m$-ary search trees, as well as $d$-ary increasing trees. Recently, Janson~\cite{JansonB} has shown
that also recursive trees and generalized plane-oriented recursive trees can be modeled in terms of split trees.
We expect that all three families of bucket increasing trees generated by a tree evolution process are also random split trees.
We note in passing a two-stage process to construct them: first, generate ordinary recursive trees, $d$-ary increasing trees 
and generalized plane-oriented recursive trees as outlined in~\cite{DevroyeA,JansonB} and second, apply to them a clustering map described in~\cite{BucketPanKuAccepted}.

\smallskip

Bilabelled trees: In particular for bilabelled trees, bucket size $b=2$, these findings open the gate for a combined approach: a combinatorial analysis and a top-down approach using the underlying tree structure and tools from analytic combinatorics, as well as a bottom-up analysis using the step-by-step construction from the probabilistic growth rules. This also has applications for increasing diamonds~\cite{Hwang2016}, which are certain directed acyclic graphs useful for modelling executions of series-parallel concurrent processes. As we know from~\cite{BucketPanKuAccepted}, increasing diamonds are in bijection with bilabelled increasing trees. Thus, our results for bucket size $b=2$ allow to study families of increasing diamonds using probabilistic growth rules. The authors are currently investigating into this matter. 

\smallskip

Bucket size $b=b(n)$: One can also study the effects of bucket sizes $b=b(n)$, depending on the total number of labels $n$, as $n$ tends to infinity. How do parameters like depths, profiles, height, width, etc. and their limit laws change when $b\to \infty$? At which growth rate $b=b(n)$ is the root degree unbounded and all other nodes have bounded degrees? Various questions of such kind seem to be of interest.

\section{Acknowledgments}
We warmly thank a referee of~\cite{BucketPanKuAccepted} for emphasizing the problem of determining all tree evolution processes creating random bucket increasing trees, leading to the current work.

\bibliography{bucketinc-evo-refs}{}
\bibliographystyle{plain}

\end{document}